\documentclass[1p]{elsarticle}

\usepackage{lineno}
\modulolinenumbers[5]

\journal{ }










\usepackage{mathrsfs}
\usepackage{amsfonts}
\usepackage{enumerate}
\usepackage{latexsym}
\usepackage[all,cmtip]{xy}
\biboptions{numbers,sort&compress}

\usepackage{verbatim}


\usepackage[colorlinks,linkcolor=blue,citecolor=blue]{hyperref}

\usepackage{amsmath,amssymb,xspace,amsthm}
\usepackage{bbm}

\numberwithin{equation}{section}

\def\fg{\mathfrak{g}}

\def\fk{\mathfrak{k}}
\def\fm{\mathfrak{m}}

\def\fB{\mathfrak{B}}

\def\fK{\mathfrak{K}}
\def\fL{\mathfrak{L}}

\def\cA{\mathcal{A}}

\def\cF{\mathcal{F}}

\def\cK{\mathcal{K}}

\def\cT{\mathcal{T}}

\def\sJ{\mathscr{J}}

\def\bC{\mathbb{C}}
\def\bN{\mathbb{N}}
\def\bZ{\mathbb{Z}}

\def\Supp{\mathrm{Supp}}

\def\Ker{\mathrm{Ker}}

\def\ptl{\partial}

\newtheorem{theo}{{Theorem}}[section]
\newtheorem{lemm}[theo]{Lemma}
\newtheorem{rema}[theo]{Remark}

\newtheorem{coro}[theo]{Corollary}
\newtheorem{prop}[theo]{Proposition}

\allowdisplaybreaks

\begin{document}

\begin{frontmatter}



\title{Classification of simple Harish-Chandra modules over the Neveu-Schwarz algebra and its contact subalgebra}


\author{Yan-an Cai, Rencai L\"{u}}

\begin{abstract}
In this paper, we classify all simple jet modules for the Neveu-Schwarz algebra $\widehat{\fk}$ and its contact subalgebra $\fk^+$. Based on these results, we give a classification of simple Harish-Chandra modules for $\widehat{\fk}$ and $\fk^+$.
\end{abstract}

\begin{keyword}
the contact subalgebra, Neveu-Schwarz algebra, Harish-Chandra modules, Jet modules
\MSC[2000] 17B10, 17B20, 17B65, 17B66, 17B68
\end{keyword}

\end{frontmatter}


\section{Introduction}

We denote by $\bZ, \bZ_+, \bN, \bC$ and $\bC^*$ the sets of all integers, non-negative integers, positive integers, complex numbers, and nonzero complex numbers, respectively. All vector spaces and algebras in this paper are over $\bC$. We denote by $U(L)$ the universal enveloping algebra of the Lie (super)algebra $L$ over $\bC$. Also, we denote by $\delta_{i,j}$ the Kronecker delta. Throughout this paper, by subalgebras, submodules for Lie superalgebras we mean subsuperalgebras and subsupermodules respectively.

Superconformal algebras have been widely studied in mathematical physics. The simplest examples are the Virasoro algebra, the central extension of the Witt algebra $W$, and its subalgebra $W^+$. An important class of modules for the Virasoro algebra and $W^+$ are the so-called quasifinite modules (or Harish-Chandra modules), the weight modules with finite dimensional weight spaces, which were classified by Mathieu in \cite{Ma}.

After the Virasoro algebra (corresponding to $N=0$), we have the $N=1$ superconformal algebras, also known as the super-Virasoro algebras: the Neveu-Schwarz algebra and the Ramond algebra. Weight modules for the super-Viraoro algebras  have been extensively investigated (cf. \cite{DLM,IK1,IK2}), for more related results we refer the reader to \cite{CP, Rao, Ka, Ka1, KL, KS, LPX1, LPX2, Ma, MZe, MZ, S1} and references therein.

Based on the classification of simple jet modules introduced by Y. Billig in \cite{B} (see also \cite{Rao}), the authors gave a complete classification of simple Harish-Chandra modules for Lie algebra of vector fields on a torus with the so-called $A$ cover theory in \cite{BF}. Recently, with the study of jet modules, a classification of simple Harish-Chandra modules for generalized Heisenberg-Virasoro, the Witt superalgebras, etc., were given, see \cite{LG, LZ, XL} and references therein.

A complete classification for the $N=1$ superconformal algebra was given in \cite{S2}. However, the complicated computations in the proofs maike it difficult to follow. Recently, with the theory of the $A$-cover,
a new approach to classify all simple Harish-Chandra modules for the $N=1$ Ramond algebra $\mathfrak{s}$ was given in \cite{CLL}. However, it is more difficult to classify such modules for the Neveu-Schwarz algebra.

A superversion of $W$ and $W^+$ are the Lie superalgebras  of contact vector fields on the supercircle $S^{1|1}$ and the superspace $\bC^{1|1}$, say $\fk$ (also known as the centerless Neveu-Schwarz algebra) and $\fk^+$.  In this paper, we classify simple jet modules for $\fk^+$. As a consequence, we give a classification of simple jet modules for $\fk$. Then based on these results, with the $A$ cover theory we classify simple Harish-Chandra modules for the Neveu-Schwarz algebra $\widehat{\fk}$ and its contact subalgebra $\fk^+$, which is a super version of Mathieu's results on the Virasoro algebra and its subalgebra $W^+$ in \cite{Ma}. This also gives a new approach for Su's results in \cite{S2}.

The paper is organized as follows. In Section \ref{pre}, we collect some basic results for our study. Simple jet modules are classified in Section \ref{cuspidalAK}. Finally, we classify all simple Harish-Chandra modules for the Neveu-Schwarz algebra and its contract subalgebra in Section \ref{main}.

\section{Preliminaries}
\label{pre}

The Neveu-Schwarz algebra $\widehat{\fk}$ is the Lie superalgebra over $\bC$ with a basis $\{L_n,G_r,C\,|\,n\in\bZ, r\in\bZ+\frac{1}{2}\}$ satisfying the commutation relations
\begin{align*}
&|L_n|=\bar{0}, |G_r|=\bar{1}, |C|=\bar{0};\\
&[L_m,L_n]=(n-m)L_{m+n}+\delta_{m+n,0}\frac{m^3-m}{12}C;\\
&[L_m,G_r]=(r-\frac{m}{2})G_{m+r};\\
&[G_r,G_s]=-2L_{r+s}+\frac{1}{3}\delta_{r+s,0}(r^2-\frac{1}{4})C.
\end{align*}
$\widehat{\fk}$ is a $\frac{1}{2}\bZ$-graded Lie superalgebra with $\widehat{\fk}_i=\bC L_i+\delta_{i,0}\bC C, \forall i\in\bZ$ and $\widehat{\fk}_{j}=\bC G_{j}, \forall j\in\frac{1}{2}+\bZ$. Let $\fk=\widehat{\fk}/\bC C$. Then $\fk$ is also a $\frac{1}{2}\bZ$-graded Lie superalgebra. For $p\in\frac{1}{2}\bZ$, set $\fk_{\geq p}=\sum\limits_{i\geq p}\fk_i$. Then $\fk_{\geq-1}$ and $\fk_{\geq p}(p\in\frac{1}{2}\bZ_+)$ are Lie supersubalgebra of $\fk$. In particular, $\fk^+=\fk_{\geq-1}$ is also a subalgebra of $\widehat{\fk}$, called the contact subalgebra of $\widehat{\fk}$, and $\fk_{\geq p}$ is an ideal of $\fk_{\geq0}$ for any $p\in\frac{1}{2}\bZ_+$. 

Let $A=\bC[t,t^{-1}]\otimes\Lambda(1)$, where $\Lambda(1)$ is the Grassmann algebra in one variable $\xi$. This algebra is $\bZ_2$-graded with $|t|=\bar{0},|\xi|=\bar{1}$. $A$ is naturally a $\fk$-module: for all $x\in A,i\in\bZ, m\in\bZ+\frac{1}{2}$,
\begin{align*}
L_i\circ x&=t^{i+1}\ptl_t(x)+\frac{1}{2}(i+1)t^i\xi\ptl_\xi(x),\\
G_m\circ x&=t^{m+\frac{1}{2}}\xi\ptl_t(x)-t^{m+\frac{1}{2}}\ptl_\xi(x),
\end{align*}
here $\ptl_t=\frac{\ptl}{\ptl t},\ptl_\xi=\frac{\ptl}{\ptl\xi}$. Hence, we have the extended Neveu-Schwarz algebras $\tilde{\fk}=\fk\ltimes A, \tilde{\fk}^+=\fk^+\ltimes A$ and $\tilde{\fk}^{++}=\fk^+\ltimes A^+$, with $A$ and $A^+$ being abelian Lie superalgebras,  where $A^+=\bC[t]\otimes\Lambda(1)$.

On the other hand, $\fk$ ($\fk^+$, respectively) has a natural module structure over the abelian superalgebra $A$ ($A^+$, respectively):
\begin{equation}\label{a-action}
t^iL_j=L_{i+j}, t^iG_m=G_{m+i}, \xi L_j=\frac{1}{2}G_{j+\frac{1}{2}}, \xi G_m=0.
\end{equation}
And $\fk$ ($\fk^+$, respectively) is a $\tilde{\fk}$ ($\tilde{\fk}^{++}$) module with adjoint $\fk$ ($\fk^+$, respectively) actions and $A$ ($A^+$) acting as \eqref{a-action}. To see this, we only need to verify
\[
v(ax)-(-1)^{|v||a|}a(vx)=[v,a]x=(v\circ a)x,
\]
for all homogeneous $x,v\in\fk$ ($\fk^+$, respectively) and $a\in A$ ($A^+$, respectively). In fact, we have
\begin{align*}
&[L_m,t^iL_n]-t^i[L_m,L_n]=[L_m,L_{n+i}]-(n-m)t^iL_{m+n}=iL_{m+n+i}=it^{m+i}L_n,\\
&[L_m,\xi L_n]-\xi[L_m,L_n]=\frac{1}{2}[L_m,G_{n+\frac{1}{2}}]-(n-m)\xi L_{m+n}=\frac{m+1}{4}G_{m+n+\frac{1}{2}}=\frac{m+1}{2}t^m\xi L_n,\\
&[L_m,t^iG_r]-t^i[L_m,G_r]=[L_m,G_{r+i}]-(r-\frac{m}{2})t^iG_{m+r}=iG_{m+r+i}=it^{m+i}G_r,\\
&[L_m,\xi G_r]-\xi[L_m,G_r]=0,\\
&[G_r,t^iL_n]-t^i[G_r,L_n]=[G_r,L_{n+i}]+(r-\frac{n}{2})t^iG_{r+n}=\frac{i}{2}G_{r+n+i}=it^{r+i-\frac{1}{2}}\xi L_n,\\
&[G_r,t^iG_s]-t^i[G_r,G_s]=[G_r,G_{s+i}]+2t^iL_{r+s}=0,\\
&[G_r,\xi L_n]+\xi[G_r,L_n]=\frac{1}{2}[G_r,G_{n+\frac{1}{2}}]-(r-\frac{n}{2})\xi G_{r+n}=-L_{r+n+\frac{1}{2}}=-t^{r+\frac{1}{2}}L_n,\\
&[G_r,\xi G_s]+\xi[G_r,G_s]=-2\xi L_{r+s}=-G_{r+s+\frac{1}{2}}=-t^{r+\frac{1}{2}}G_s.
\end{align*}

An $A\fk$ ($A\fk^+, A^+\fk^+$, respectively) module is a $\widetilde{\fk}$ ($\widetilde{\fk}^+,\widetilde{\fk}^{++}$, respectively) module with $A$ ($A, A^+$, respectively) acting associatively. Let $U=U(\widetilde{K}), U^+=U(\widetilde{K}^+), U^{++}=U(\widetilde{K}^{++})$ and $I$ ($I^+,I^{++}$, respectively) be the left ideal of $U$ ($U^+, U^{++}$, respectively) generated by $t^i\cdot t^j-t^{i+j}, t^0-1,t^i\cdot\xi-t^i\xi$ and $\xi\cdot\xi$ for all $i,j\in\bZ$ ($\bZ, \bZ_{+}$, respectively). Then it is clear that $I$ ($I^+, I^{++}$ respectively) is an ideal of $U$ ($U^+, U^{++}$ respectively). Let $\overline{U}=U/I, \overline{U}^+=U^+/I^+, \overline{U}^{++}=U^{++}/I^{++}$. Then the category of $A\fk$ ($A\fk^+,A^+\fk^+$ respectively) modules is naturally equivalent to the category of $\overline{U}$ ($\overline{U}^+,\overline{U}^{++}$ respectively) modules.

Let $\fg$ be any of $\fk,\fk^+,\widetilde{\fk},\widetilde{\fk}^+,\widetilde{\fk}^{++},\widehat{\fk}, \fk_{\ge0}$. A $\fg$ module $M$ is called a \emph{weight} module if the action of $L_0$ on $M$ is diagonalizable. Let $M$ be a weight $\fg$ module. Then $M=\bigoplus\limits_{\lambda\in\bC}M_\lambda$, where $M_\lambda=\{v\in M\,|\,L_0v=\lambda v\}$, called the weight space of weight $\lambda$. $\Supp(M):=\{\lambda\in\bC\,|\,M_\lambda\neq0\}$ is called the support of $M$. A \emph{cuspidal} (\emph{uniformly bounded}) $\fg$ module is a weight module $M$ such that the dimension of weight spaces is uniformly bounded, that is there is $N\in\bN$ with $\dim M_\lambda<N$ for all $\lambda\in\Supp(M)$. Clearly, if $M$ is simple, then $\Supp(M)\subseteq\lambda+\frac{1}{2}\bZ$ for some $\lambda\in\bC$.

Let $(\fg,\cA)$ be $(\fk,A), (\fk^+,A)$ or $(\fk^+,\cA^+)$. A \emph{jet $\fg$ module} associated to $\cA$ is a weight $\cA\fg$ module with a finite dimensional weight space. Denote the category consisting of all jet $\fg$ modules associated to $\cA$ by $\sJ(\fg,\cA)$. Clearly, any simple module in $\sJ(\fk,A)$ and $\sJ(\fk^+,A)$ is cuspidal.

Let $\sigma: L\to L'$ be any homomorphism of Lie superalgebras or associative superalgebras, and $M$ be any $L'$-module. Then $M$ become an $L$-module, denoted by $M^\sigma$, under $x\cdot v:=\sigma(x)v, \forall x\in L, v\in M$. Denote by $T$ the automorphism of $L$ defined by $T(x):=(-1)^{|x|}x, \forall x\in L$. For any $L$-module $M$, $\Pi(M)$ is the module defined by a parity-change of $M$.

A module $M$ over an associative superalgebra $B$ is called \emph{strictly simple} if it is a simple module over the associative algebra $B$ (forgetting the $\bZ_2$-gradation).

We need the following result on tensor modules over tensor superalgebras.
\begin{lemm}[{\cite[Lemma 2.1, 2.2]{LX}}]\label{tensor}
Let $B,B'$ be associative superalgebras, and $M,M'$ be $B,B'$ modules, respectively.
\begin{enumerate}
\item $M\otimes M'\cong\Pi(M)\otimes\Pi(M'^T)$ as $B\otimes B'$-modules.
\item If in addition that $B'$ has a countable basis and $M'$ is strictly simple, then
\begin{enumerate}
\item $M\otimes M'$ is a simple $B\otimes B'$-module if and only if $M$ is a simple $B$-module.
\item If $V$ is a simple $B\otimes B'$-module containing a strictly simple $B'=\bC\otimes B'$ module $M'$, then $V\cong M\otimes M'$ for some simple $B$-module $M$.
\end{enumerate}
\end{enumerate}
\end{lemm}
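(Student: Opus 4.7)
The plan is to handle the three parts separately; part (1) is pure sign bookkeeping, while parts (2)(a)--(2)(b) reduce to a super-analogue of the Jacobson density theorem. For part (1), I would take the identity of underlying vector spaces as the candidate isomorphism $\phi: M\otimes M'\to \Pi(M)\otimes \Pi(M'^T)$ and verify directly that it intertwines the $B\otimes B'$-actions. With the Koszul sign rule, the action on $M\otimes M'$ is $(b\otimes b')(m\otimes m')=(-1)^{|b'||m|}(bm)\otimes(b'm')$; on the right-hand side the parity shift $\Pi$ applied to the first factor introduces an extra $(-1)^{|b'|}$ in the Koszul sign, while the twist $T$ applied to $B'$ contributes another $(-1)^{|b'|}$, and the two signs cancel. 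The only subtlety is consistency of conventions for $\Pi$ and $T$.

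For part (2)(a), I would invoke a super version of the Jacobson density theorem. Since $M'$ is strictly simple and $B'$ has a countable basis, a standard Schur argument yields $\mathrm{End}_{B'}(M')=\bC$, and $B'$ acts densely on $M'$ as an ungraded module. The ``only if'' direction is immediate: a proper $B$-submodule $N\subsetneq M$ gives a proper $B\otimes B'$-submodule $N\otimes M'\subsetneq M\otimes M'$. For the converse, given a nonzero element $\sum_{i=1}^{n}m_i\otimes m_i'$ in a submodule $N$ of $M\otimes M'$ with the $m_i'$ linearly independent, I would use density to find $b'\in B'$ with $b' m_1'=m_1'$ and $b' m_i'=0$ for $i>1$, producing $m_1\otimes m_1'\in N$. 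Then $M\otimes m_1'\subseteq N$ by the $B$-simplicity of $M$, and a second density argument extends this to all of $M\otimes M'$.

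For part (2)(b), I would set $M:=\mathrm{Hom}_{B'}(M',V)$ with its natural $\bZ_2$-grading, equipped with the $B$-action $(b\cdot\varphi)(m'):=b\,\varphi(m')$ (up to the appropriate Koszul sign), and exhibit the evaluation map $M\otimes M'\to V$, $\varphi\otimes m'\mapsto\varphi(m')$. This is a $B\otimes B'$-module homomorphism; it is nonzero because the given inclusion $M'\hookrightarrow V$ provides a nonzero element of $M$, and hence surjective by the simplicity of $V$. Injectivity follows from strict simplicity of $M'$ via a linear-independence argument in the image, and simplicity of $M$ as a $B$-module is then forced by the simplicity of $V$ together with the direction of (2)(a) already established. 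The main obstacle throughout is verifying that the density theorem applies in the super setting despite the $\bZ_2$-grading; here the strictly-simple hypothesis, rather than merely super-simple, is essential, as it is precisely what guarantees that $B'$ acts densely by ungraded transformations, so one can realize an arbitrary linear operator on any finite-dimensional subspace of $M'$ without any parity constraint.
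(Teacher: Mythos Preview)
The paper does not give its own proof of this lemma; it is quoted verbatim from \cite{LX} with only the citation ``\cite[Lemma 2.1, 2.2]{LX}'' as justification. So there is no in-paper argument to compare against.

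Your sketch is the standard route and is essentially correct. Part~(1) is exactly the sign computation you describe; the only care needed is to fix once and for all how $\Pi$ interacts with the module action (some conventions insert a sign $(-1)^{|b|}$ in $b\cdot \Pi(m)$, others do not), and then the cancellation you indicate goes through. For part~(2)(a), the density argument is right in spirit, but note that the element $b'\in B'$ produced by Jacobson density need not be homogeneous, so when you apply $1\otimes b'$ to $\sum m_i\otimes m_i'$ the Koszul signs depend on the parities of the $m_i$. The clean fix is to start from a nonzero \emph{homogeneous} element of the submodule $N$, write it as $\sum m_i\otimes m_i'$ with the $m_i'$ homogeneous and linearly independent, and observe that then the parity of each $m_i$ is determined by that of $m_i'$; after that, splitting $b'$ into even and odd parts and tracking the two resulting signs lets the reduction to a single tensor $m_1\otimes m_1'$ go through. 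This is precisely where strict simplicity (as opposed to mere $\bZ_2$-graded simplicity) is used, exactly as you point out. Part~(2)(b) via $M=\mathrm{Hom}_{B'}(M',V)$ and the evaluation map is the standard construction and works; you should also check that $M$ so defined is $\bZ_2$-graded and that the evaluation map is even, both of which are routine.
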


The subalgebra of $\fk$ spanned by $\{L_k\,|\, k\in\bZ\}$ is isomorphic to the Witt algebra $W$. Denote by $W^+$ the subalgebra of $W$ spanned by $\{L_k\,|\,k\in-1+\bZ_+\}$. The following results for $W$ modules and $W^+$ modules will be used.
\begin{lemm}\label{Omegaoper}
Let $\Omega_{k, s}^{(m)}=\sum\limits_{i=0}^m(-1)^i\binom{m}{i}L_{k-i}L_{s+i}$.
\begin{enumerate}
\item ({\cite[Corollary 3.7]{BF}})
For every $\ell\in\bN$ there exists $m\in\bN$ such that for all $k, s\in\bZ$, $\Omega_{k,s}^{(m)}$ annihilate every cuspidal $W$-module with a composition series of length $\ell$.
\item (\cite[Corollary 4.4]{XL}). For every $r\in\bN$ there is an $m\in\bN$ such that for all $k,s\in\bZ_+$, $\Omega_{k+m-1,s-1}^{(m)}$ annihilates every cuspidal $W^+$ module $V$ with $\dim V_\lambda\leq r$ for all $\lambda\in\Supp(V)$.
\end{enumerate}
\end{lemm}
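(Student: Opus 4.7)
The plan is to exploit the classification of simple cuspidal modules for $W$ and $W^+$, on which the action of $L_n$ is polynomial in $n$, and then to observe that $\Omega^{(m)}_{k,s}$ is, up to the binomial weights, an order-$m$ finite difference in $i$ applied to the operator family $i\mapsto L_{k-i}L_{s+i}$. Since an order-$m$ finite difference annihilates every polynomial family of degree $<m$, the main content becomes bounding that polynomial degree uniformly in $k,s$.

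For part (1), I would invoke Mathieu's theorem to identify each simple cuspidal $W$-module as an intermediate-series module $A_{\alpha,\beta}$ (or a finite-dimensional subquotient thereof) with basis $\{v_j\}$ and $L_n v_j=(\alpha+j+n\beta)v_{j+n}$. A direct computation then gives
\[
L_{k-i}L_{s+i}v_j = \bigl(\alpha+j+(s+i)\beta\bigr)\bigl(\alpha+j+s+i+(k-i)\beta\bigr)v_{j+s+k},
\]
a polynomial in $i$ of degree $2$, so $\Omega^{(m)}_{k,s}$ kills every simple cuspidal $W$-module as soon as $m\ge 3$, uniformly in $k,s$. For a general cuspidal $W$-module with composition series of length $\ell$, the weight multiplicities are bounded by $\ell$, and $L_k$ restricted to a fixed weight space lies in a uniformly bounded-dimensional space of operators; combined with the simple case, one argues by induction up the composition series that some $m=m(\ell)$ works uniformly.

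Part (2) follows the same philosophy on the $W^+$ side, with the shifted indices $k+m-1$ and $s-1$ chosen precisely so that $k+m-1-i\ge -1$ and $s-1+i\ge -1$ for all $0\le i\le m$; this is exactly the condition guaranteeing that each factor $L_{k+m-1-i}L_{s-1+i}$ lies in $W^+$ whenever $k,s\in\bZ_+$. The analogous polynomial-in-$i$ structure on simple cuspidal $W^+$-modules, now quantified by the uniform multiplicity bound $r$ rather than the composition length, yields the claim after finite-differencing. The hard step in both parts is the uniformity: producing one $m$ valid for all $k,s$ simultaneously and for every cuspidal module of the prescribed size. I expect this uniformity — in particular the control of how non-split extensions between simple cuspidals inflate the effective polynomial degree, together with the treatment of the exceptional finite-dimensional composition factors coming from $A_{0,0}$ and $A_{0,1}$ — to be the main obstacle, and this is precisely where the delicate bookkeeping in \cite{BF} and \cite{XL} takes place.
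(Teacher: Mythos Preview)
The paper does not prove this lemma; it is quoted verbatim from \cite[Corollary~3.7]{BF} and \cite[Corollary~4.4]{XL}, so there is no in-paper argument to compare against.

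On your sketch itself: the computation on a single intermediate-series module is correct, and the finite-difference interpretation of $\Omega^{(m)}_{k,s}$ is exactly the right heuristic. The problem is the passage to composition length $\ell>1$. ``Induction up the composition series'' does not close: if $\Omega^{(m)}_{k,s}$ annihilates a submodule $N$ and the quotient $M/N$, you only obtain $\Omega^{(m)}_{k,s}M\subseteq N$, hence $\Omega^{(m)}_{k',s'}\Omega^{(m)}_{k,s}M=0$, and a product of two $\Omega$'s is a quartic element of $U(W)$, not of the form $\Omega^{(m')}_{k'',s''}$. Nothing in your outline converts this into the vanishing of a single $\Omega^{(m')}$. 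You flag uniformity as the obstacle, but the mechanism you propose to overcome it does not go through.

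The actual arguments in \cite{BF} and \cite{XL} take a different route: they do \emph{not} invoke Mathieu's classification (in the torus setting of \cite{BF} that would be circular), but work directly from the bound $\dim M_\lambda\le r$ together with the relations $[L_a,L_b]=(b-a)L_{a+b}$. Roughly, one fixes bases of the weight spaces and shows that the matrix-valued family $n\mapsto L_n|_{M_\lambda\to M_{\lambda+n}}$ is forced, by these commutation relations and the uniform dimension bound, to be polynomial in $n$ of degree controlled by $r$ alone; the finite-difference identity then kills $\Omega^{(m)}$ for $m$ large enough. This handles all cuspidal modules of the given size at once, bypassing any induction on length and any analysis of extensions between simples.
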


\section{Jet modules}
\label{cuspidalAK}

In this section, we will classify all simple jet modules in $\sJ(\fk, A), \sJ(\fk^+,A)$ and $\sJ(\fk^+,A^+)$. First, we have
\begin{lemm}
Let $M\in\sJ(\fk,A)$ be simple. Then $M$ is also a simple module in $\sJ(\fk^+,A)$, and the $A\fk$ module structure is uniquely determined by the $A\fk^+$ module structure.
\end{lemm}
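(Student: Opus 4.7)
The plan is to establish both assertions of the lemma: the simplicity of $M$ as an $A\fk^+$-module, and the uniqueness of the $A\fk$-structure extending the $A\fk^+$-structure.

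For the simplicity, let $N$ be any nonzero $A\fk^+$-submodule of $M$; the strategy is to show $N$ is automatically closed under the $\fk$-action, so that $N=M$ follows from the $A\fk$-simplicity of $M$. Using the identities in $\fk$
\[
L_{n-1}=\tfrac{1}{n+1}[L_{-1},L_n]\ (n\neq -1),\quad G_{r-1}=\tfrac{1}{r+\frac12}[L_{-1},G_r]\ (r\neq -\tfrac12),\quad L_{-2}=-\tfrac12\{G_{-\frac12},G_{-\frac32}\},
\]
a downward induction shows that if $G_{-3/2}N\subseteq N$, then $L_nN\subseteq N$ for all $n\leq -2$ and $G_rN\subseteq N$ for all $r\leq -\tfrac32$. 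So the key step is to prove $G_{-3/2}N\subseteq N$.

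For the uniqueness, suppose $\{L'_n,G'_r\}$ define a second $A\fk$-structure on $M$ extending the same $A\fk^+$-structure. The brackets $[L_n,t^j]=jt^{n+j}$ and $[G_r,t^j]=jt^{r+j-\frac12}\xi$, together with the corresponding relations involving $\xi$, are forced by the $\widetilde{\fk}$-bracket structure alone and are independent of the choice of $\fk$-action, so the differences $D_n=L_n-L'_n$ and $E_r=G_r-G'_r$ commute with all of $A$. The same chain of Lie identities used above then reduces the uniqueness claim to showing $E_{-3/2}=0$.

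The main obstacle in both parts is the handling of $G_{-3/2}$. The most direct supercommutator available is $\{G_{1/2},G_{-3/2}\}=-2L_{-1}$, which for $v\in N$ gives
\[
G_{-3/2}(G_{1/2}v)+G_{1/2}(G_{-3/2}v)=-2L_{-1}v\in N,
\]
but this only shows that a \emph{sum} of two terms lies in $N$, rather than isolating $G_{-3/2}v$. To resolve this I would combine the invertibility of $t\in A$ (so that $t^j$ induces isomorphisms between weight spaces), the finite-dimensionality of each weight space (so that $M$ is finitely generated as an $A$-module), and a Schur-type argument applied to the simple $A\fk^+$-module $M$ to pin down the action of $G_{-3/2}$ on $N$ in the first part, and to force $E_{-3/2}=0$ in the second.
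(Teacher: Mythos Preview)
Your reduction to the single generator $G_{-3/2}$ is reasonable, but the proposal leaves exactly the hard step undone. The phrase ``Schur-type argument applied to the simple $A\fk^+$-module $M$'' is circular for the simplicity assertion (you do not yet know $M$ is simple over $A\fk^+$), and for the uniqueness assertion it is unclear what map Schur's lemma would apply to: the operator $E_{-3/2}$ commutes with $A$ and with $\fk_{\geq 1/2}$, but \emph{not} with $L_{-1}$ or $G_{-1/2}$ (indeed $[L_{-1},E_{-3/2}]=-E_{-5/2}$ and $[G_{-1/2},E_{-3/2}]=-2D_{-2}$), so it is not an $A\fk^+$-module endomorphism, and it shifts weight by $-\tfrac32$, so it is not an endomorphism in any graded sense either. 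Invertibility of $t$ and finite generation over $A$ do not by themselves pin down $E_{-3/2}$.

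The paper's argument is quite different and hinges on cuspidality, which you never substantively invoke. Since $M\in\sJ(\fk,A)$ has finite-dimensional weight spaces and $t$ acts invertibly, $M$ is cuspidal as a $W$-module, so by Lemma~\ref{Omegaoper} the operators $\Omega^{(m)}_{k,s}=\sum_i(-1)^i\binom{m}{i}L_{k-i}L_{s+i}$ annihilate $M$ for some fixed $m$. Taking suitable combinations of commutators with $t^r$ and with $G_p$ then yields, on $M$, identities of the form
\[
\sum_{i=0}^{n}(-1)^i\binom{n}{i}t^{k-i}\cdot L_{s+i}=0,\qquad
\sum_{i=0}^{n}(-1)^i\binom{n}{i}t^{k-i}\cdot G_{p+i}=0
\]
for all $k,s\in\bZ$, $p\in\bZ+\tfrac12$, and all $n$ sufficiently large. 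These finite-difference relations express every $L_n$ and every $G_r$ as an $A$-linear combination of the finitely many $L_i$ ($-1\le i\le m$) and $G_{j-\frac12}$ ($0\le j\le m+1$), all of which lie in $\fk^+$. Both assertions of the lemma then follow at once: any $A\fk^+$-submodule is automatically $\fk$-stable, and the full $\fk$-action is recovered from the $A\fk^+$-action. The crucial input is the annihilation by $\Omega^{(m)}$, which is precisely where the finite-dimensionality of weight spaces enters; your outline does not use it.
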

\begin{proof}
Following from Lemma \ref{Omegaoper}, on $M$ we have
\begin{align*}
0=&\sum\limits_{i=0}^1\sum\limits_{j=0}^2(-1)^{i+j}\binom{2}{j}[t^{r+i-j},\Omega^{(m)}_{k+1-i,s-1+j}]=\sum\limits_{i=0}^{m+2}(-1)^i\binom{m+2}{i}t^{r+k+1-i}\cdot L_{s-1+i},
\end{align*}
where $r,k,s\in\bZ$. So $A\cdot\sum\limits_{i=0}^{n}(-1)^i\binom{n}{i}t^{k-i}\cdot L_{s+i}M=0$ for all $k,s\in\bZ, n\geq m+2$. Hence, on $M$ we have
\begin{align*}
0=&[\sum\limits_{i=0}^{m+3}(-1)^i\binom{m+3}{i}t^{k-i}\cdot L_{s-1+i},G_{p+1}]-[\sum\limits_{i=0}^{m+3}(-1)^i\binom{m+3}{i}t^{k-i}\cdot L_{s+i},G_p]\\
=&\sum\limits_{i=0}^{m+3}(-1)^i\binom{m+3}{i}(p+\frac{3-i-s}{2})t^{k-i}\cdot G_{p+s+i}-\sum\limits_{i=0}^{m+3}\binom{m+3}{i}(k-i)t^{p+k-i+\frac{1}{2}}\xi\cdot L_{s-1+i}\\
&-\Big(\sum\limits_{i=0}^{m+3}(-1)^i\binom{m+3}{i}(p-\frac{i+s}{2})t^{k-i}\cdot G_{p+s+i}-\sum\limits_{i=0}^{m+3}\binom{m+3}{i}(k-i)t^{p+k-i-\frac{1}{2}}\xi\cdot L_{s+i}\Big)\\
=&\frac{3}{2}\sum\limits_{i=0}^{m+3}(-1)^i\binom{m+3}{i}t^{k-i}\cdot G_{p+s+i},
\end{align*}
where $k,s\in\bZ, p\in\bZ+\frac{1}{2}$. So $A\cdot\sum\limits_{i=0}^{n}(-1)^i\binom{n}{i}t^{k-i}\cdot G_{p-i}M=0$ for all $n\geq m+3, k\in\bZ, p\in\bZ+\frac{1}{2}$. Therefore, the actions of $L_i (i\in\bZ),G_p(p\in\bZ+\frac{1}{2})$ are determined by the actions of $L_i(-1\leq i\leq m), G_p(-\frac{1}{2}\leq p\leq m+\frac{1}{2})$. And hence lemma follows.
\end{proof}

Thus, to determine all simple modules in $\sJ(\fk,A)$, it suffices to determine all simple modules in $\sJ(\fk^+,A)$.

For $n\in\bZ_+$, let
\begin{align*}
L_n'&:=\sum\limits_{i=0}^{n+1}(-1)^{i+1}\binom{n+1}{i}t^{n-i+1}\cdot L_{i-1}+\frac{n+1}{2}\sum\limits_{i=0}^n(-1)^i\binom{n}{i}t^{n-i}\xi\cdot G_{i-\frac{1}{2}},\\
G'_{n-\frac{1}{2}}&:=\sum\limits_{i=0}^n(-1)^i\binom{n}{i}(t^{n-i}\cdot G_{i-\frac{1}{2}}-2t^{n-i}\xi\cdot L_{i-1}).
\end{align*}
Then we have
\begin{align}
&\sum\limits_{k=0}^n(-1)^k\binom{n+1}{k+1}t^{n-k}\cdot(L'_k-\frac{k+1}{2}\xi\cdot G'_{k-\frac{1}{2}})+t^{n+1}\cdot L_{-1}\nonumber\\
=&\sum\limits_{k=0}^n\sum\limits_{i=0}^{k+1}(-1)^{k+i+1}\binom{n+1}{k+1}\binom{k+1}{i}t^{n-i+1}\cdot L_{i-1}+t^{n+1}\cdot L_{-1}\nonumber\\
=&\sum\limits_{j=0}^n\sum\limits_{k=j}^n(-1)^{k+j}\binom{n+1}{k+1}\binom{k+1}{j+1}t^{n-j}\cdot L_j+\sum\limits_{j=0}^{n+1}(-1)^j\binom{n+1}{j}t^{n+1}\cdot L_{-1}\nonumber\\
=&\sum\limits_{j=0}^n(-1)^j\binom{n+1}{j+1}\sum\limits_{k=j}^n(-1)^k\binom{n-j}{k-j}t^{n-j}\cdot L_j\nonumber\\
=&L_n,\label{L}
\end{align}
and
\begin{align}
&\sum\limits_{k=0}^n(-1)^k\binom{n}{k}t^{n-k}\cdot(G'_{k-\frac{1}{2}}-2\xi\cdot L'_{k-1})=\sum\limits_{k=0}^n\sum\limits_{i=0}^k(-1)^{k+i}\binom{n}{k}\binom{k}{i}t^{n-i}\cdot G_{i-\frac{1}{2}}\nonumber\\
=&\sum\limits_{i=0}^n\sum\limits_{k=i}^n(-1)^{k+i}\binom{n}{i}\binom{n-i}{k-i}t^{n-i}\cdot G_{i-\frac{1}{2}}=G_{n-\frac{1}{2}}.\label{G}
\end{align}

Let $\cT$ be the supersubspace of $\overline{U}^+$ with a basis $\fB=\{L'_n,G'_{m-\frac{1}{2}}\,|\,n\in\bZ_+, m\in\bN\}$. Clearly, $\cT$ is also a supersubspace of $\overline{U}^{++}$. We have
\begin{lemm}\label{basis}
\begin{enumerate}
\item $\widetilde{\fB}=\fB\cup\{G_{-\frac{1}{2}},L_{-1}\}$ is a basis of the free left $A$ ($A^+$, respectively) module $A\cdot \fk^+$ ($A^+\cdot \fk^+$, respectively).
\item $\cT=\{x\in A\cdot \fk^+\,|\,[x,A]=[x,G_{-\frac{1}{2}}]=0\}=\{x\in A^+\cdot \fk^+\,|\,[x,A^+]=[x,G_{-\frac{1}{2}}]=0\}.$
Thus, $\cT$ is a Lie supersubalgebra of $\overline{U}^+$ as well as a Lie supersubalgebra of $\overline{U}^{++}$.
\end{enumerate}
\end{lemm}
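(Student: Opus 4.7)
By a PBW argument applied to $\widetilde{\fk}^+=\fk^+\ltimes A$ (the defining relations of $I^+$ only collapse the $U(A)$-part back to $A$), one has $\overline{U}^+\cong A\otimes U(\fk^+)$ as a left $A$-module; in particular $A\cdot\fk^+$ is free over $A$ with basis $\{L_j,G_r\mid j\ge -1,\ r\ge -\tfrac{1}{2}\}$, and the identical statement holds with $A$ replaced by $A^+$. I would then introduce the $\tfrac12\bZ$-grading on $A\cdot\fk^+$ inherited from $\fk^+$. The defining formulas immediately give
\[
L'_n=(-1)^n L_n+(\text{terms of $\fk^+$-degree }<n),\qquad G'_{n-\tfrac12}=(-1)^n G_{n-\tfrac12}+(\text{terms of $\fk^+$-degree }<n-\tfrac12),
\]
so that the transition matrix from $\{L_j,G_r\}$ to $\widetilde{\fB}=\fB\cup\{L_{-1},G_{-\tfrac12}\}$ is lower triangular over $A$ with $\pm 1$ on the diagonal, hence invertible. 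The identities \eqref{L} and \eqref{G} exhibit the inverse explicitly.

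\textbf{Part (2), forward inclusion.} The first step is to verify directly that every $L'_n$ and $G'_{n-\tfrac12}$ supercommutes with $A$ (resp.\ $A^+$) and with $G_{-\tfrac12}$. By the super-Leibniz rule it suffices to test against the generators $t^{\pm 1}$, $\xi$ and $G_{-\tfrac12}$, and each such commutator becomes a single alternating binomial sum; after separating contributions by monomial type (e.g.\ $t^a G_b$ and $t^a\xi L_b$ in $[L'_n,G_{-\tfrac12}]$), the elementary identities
\[
(i+1)\binom{n+1}{i+1}=(n+1)\binom{n}{i},\qquad (n-i+1)\binom{n+1}{i}=(n+1)\binom{n}{i}
\]
force the two halves of the definition of $L'_n$ (and of $G'_{n-\tfrac12}$) to cancel term by term. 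Combined with the fact that $A$-coefficients automatically supercommute with $A$, this shows that $\mathrm{span}_A\fB=\cT$ lies in the claimed centralizer.

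\textbf{Part (2), reverse inclusion and subalgebra claim.} Given $x\in A\cdot\fk^+$ with $[x,A]=[x,G_{-\tfrac12}]=0$, part (1) lets me write $x=\alpha\cdot L_{-1}+\beta\cdot G_{-\tfrac12}+y$ uniquely, with $\alpha,\beta\in A$ and $y$ in the $A$-span of $\fB$. Since $y$ already supercommutes with $A$, the elementary actions $L_{-1}\circ t=1$, $L_{-1}\circ\xi=0$, $G_{-\tfrac12}\circ t=\xi$ and $G_{-\tfrac12}\circ\xi=-1$ give
\[
[x,\xi]=-\beta\quad\text{and}\quad [x,t]=\alpha+\beta\xi,
\]
forcing $\alpha=\beta=0$ and hence $x=y\in\cT$. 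The identical computation works over $A^+$ since $t,\xi\in A^+$. Finally, being the centralizer of a fixed subset in the associative superalgebra $\overline{U}^+$ (resp.\ $\overline{U}^{++}$), $\cT$ is automatically closed under the supercommutator, which gives the Lie supersubalgebra assertion. The main obstacle is the binomial bookkeeping in the first step of (2); once those identities are secured, the remainder of the proof is a direct consequence of the basis in (1) together with the four elementary actions above.
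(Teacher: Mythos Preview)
Your overall strategy matches the paper's, and Part (1) is fine. The forward inclusion in Part (2) is also essentially the paper's computation, though note a slip: you write ``$\mathrm{span}_A\fB=\cT$'', but $\cT$ is by definition the $\bC$-span of $\fB$, not the $A$-span. This matters, because the $A$-span does \emph{not} commute with $G_{-\frac12}$ (e.g.\ $[G_{-\frac12},\,t\cdot L'_0]=\xi\cdot L'_0\neq 0$).

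The real gap is in your reverse inclusion. You write $x=\alpha L_{-1}+\beta G_{-\frac12}+y$ with $y$ in the $A$-span of $\fB$, use $[x,\xi]=0$ and $[x,t]=0$ to force $\alpha=\beta=0$, and then conclude $x=y\in\cT$. But at this point $y$ is only known to lie in $\mathrm{span}_A\fB$, and every element of $\mathrm{span}_A\fB$ already commutes with $A$ (as you yourself observe), so the conditions $[x,t]=[x,\xi]=0$ cannot see the $A$-coefficients of $y$ at all. You have not used the hypothesis $[x,G_{-\frac12}]=0$ anywhere in this direction, and without it the conclusion fails. What is missing is exactly the step the paper performs: writing $y=\sum f_i\cdot x_i$ with $f_i\in A$ and $x_i\in\fB$, one has
\[
0=[G_{-\tfrac12},y]=\sum (G_{-\tfrac12}\circ f_i)\cdot x_i,
\]
since $[G_{-\frac12},x_i]=0$; then the $A$-linear independence of $\fB$ from Part~(1) forces $G_{-\frac12}\circ f_i=0$, and since $G_{-\frac12}$ acts on $A$ as $\xi\partial_t-\partial_\xi$ with kernel $\bC$, each $f_i$ is a scalar and $y\in\cT$. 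Once you insert this step, your argument is complete and coincides with the paper's.
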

\begin{proof}
\begin{enumerate}
\item The $A$-linear independence of $\widetilde{\fB}$ is easy to check. And the first statement follows from \eqref{L} and \eqref{G}.
\item For the second statement, first we have
\begin{align*}
&[G_{-\frac{1}{2}},L'_n]\\
=&[G_{-\frac{1}{2}},\sum\limits_{i=0}^{n+1}(-1)^{i+1}\binom{n+1}{i}t^{n-i+1}\cdot L_{i-1}+\frac{n+1}{2}\sum\limits_{i=0}^n(-1)^i\binom{n}{i}t^{n-i}\xi\cdot G_{i-\frac{1}{2}}]\\
=&\sum\limits_{i=0}^{n+1}(-1)^{i+1}\binom{n+1}{i}((n-i+1)t^{n-i}\xi\cdot L_{i-1}+\frac{i}{2}t^{n-i+1}\cdot G_{i-\frac{3}{2}})\\
&+\frac{n+1}{2}\sum\limits_{i=0}^n(-1)^i\binom{n}{i}(-t^{n-i}\cdot G_{i-\frac{1}{2}}+2t^{n-i}\xi\cdot L_{i-1})=0,\\
&[L_n',t^k]\\
=&\sum\limits_{i=0}^{n+1}(-1)^{i+1}\binom{n+1}{i}t^{n-i+1}\cdot [L_{i-1},t^k]+\frac{n+1}{2}\sum\limits_{i=0}^n(-1)^i\binom{n}{i}t^{n-i}\xi\cdot [G_{i-\frac{1}{2}},t^k]\\
=&\sum\limits_{i=0}^{n+1}(-1)^{i+1}\binom{n+1}{i}t^{n-i+1}\cdot kt^{k+i-1}+\frac{n+1}{2}\sum\limits_{i=0}^n(-1)^i\binom{n}{i}t^{n-i}\xi\cdot kt^{i+k-1}\xi=0,\\
&[L_n',t^k\xi]\\
=&\sum\limits_{i=0}^{n+1}(-1)^{i+1}\binom{n+1}{i}t^{n-i+1}\cdot [L_{i-1},t^k\xi]+\frac{n+1}{2}\sum\limits_{i=0}^n(-1)^i\binom{n}{i}t^{n-i}\xi\cdot [G_{i-\frac{1}{2}},t^k\xi]\\
=&\sum\limits_{i=0}^{n+1}(-1)^{i+1}\binom{n+1}{i}t^{n-i+1}\cdot (kt^{k+i-1}\xi+\frac{i}{2}t^{k+i}\xi)-\frac{n+1}{2}\sum\limits_{i=0}^n(-1)^i\binom{n}{i}t^{n-i}\xi\cdot t^{i+k}\\
=&0,\\
&[G_{-\frac{1}{2}},G'_{n-\frac{1}{2}}]\\
=&\sum\limits_{i=0}^n(-1)^i\binom{n}{i}[G_{-\frac{1}{2}},t^{n-i}\cdot G_{i-\frac{1}{2}}-2t^{n-i}\xi\cdot L_{i-1}]\\
=&\sum\limits_{i=0}^{n}(-1)^i\binom{n}{i}((n-i)t^{n-i-1}\xi\cdot G_{i-\frac{1}{2}}-2t^{n-i}\cdot L_{i-1}+2t^{n-i}\cdot L_{i-1}+it^{n-i}\xi\cdot G_{i-\frac{3}{2}})\\
=&\sum\limits_{j=1}^n(-1)^{j+1}\binom{n-1}{j-1}nt^{n-j}\xi\cdot G_{j-\frac{3}{2}}+\sum\limits_{i=1}^n(-1)^in\binom{n-1}{i-1}t^{n-i}\xi\cdot G_{i-\frac{3}{2}}=0,\\
&[G'_{n-\frac{1}{2}},t^k]\\
=&\sum\limits_{i=0}^n(-1)^i\binom{n}{i}(t^{n-i}\cdot[G_{i-\frac{1}{2}},t^k]-2t^{n-i}\xi\cdot[L_{i-1},t^k])\\
=&-k\sum\limits_{i=0}^n(-1)^i\binom{n}{i}t^{n+k-1}\xi=0,\\
&[G'_{n-\frac{1}{2}},t^k\xi]\\
=&\sum\limits_{i=0}^n(-1)^i\binom{n}{i}(t^{n-i}\cdot[G_{i-\frac{1}{2}},t^k\xi]-2t^{n-i}\xi\cdot[L_{i-1},t^k\xi])\\
=&-\sum\limits_{i=0}^n(-1)^i\binom{n}{i}t^{n+k}=0.
\end{align*}
That is $\cT\subseteq\cT_1=\{x\in A\cdot \fk^+\,|\,[x,A]=[x,G_{-\frac{1}{2}}]=0\}$. On the other hand, let $x=\sum f_i\cdot x_i+x'\in\cT_1$ with $f_i\in A, x_i\in\fB,x'\in A\cdot\{G_{-\frac{1}{2}},L_{-1}\}$. Then
\[
0=[G_{-\frac{1}{2}},x]=\sum[G_{-\frac{1}{2}},f_i]\cdot x_i+[G_{-\frac{1}{2}},x']
\]
implies that $f_i\in\bC$ and $x'=a(G_{-\frac{1}{2}}-2\xi\cdot L_{-1})+bL_{-1}$ for some $a,b\in\bC$. And $[x,f]=[x',f]=0$ for all $f\in A$ implies that $a=b=0$, that is $x'=0$. So $\cT_1\subseteq\cT$ and hence $\cT=\cT_1$. \qedhere
\end{enumerate}
\end{proof}

\begin{lemm}
We have the associative superalgebra isomorphism
\begin{align*}
\iota: &\cK\otimes U(\cT)\to\overline{U}^+,\,\iota(x\otimes y)=x\cdot y,
\end{align*}
where $\cK=\bC[t^{\pm1},\xi,\ptl_t,\ptl_\xi]$ is the Weyl superalgebra, and $x\in\cK, y\in U(\cT)$. Moreover, the restriction of $\iota$ to $\cK^+\otimes U(\cT)$ is an isomorphism from $\cK^+\otimes U(\cT)$ to $\overline{U}^{++}$, where $\cK^+=A^+[\ptl_t,\ptl_\xi]$.
\end{lemm}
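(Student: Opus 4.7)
The plan is to prove $\iota$ is a well-defined homomorphism, then surjective, then injective. The first step is to realize $\cK$ as a subalgebra of $\overline{U}^+$ via $t^{\pm1},\xi\mapsto$ their images in $A$, $\ptl_t\mapsto L_{-1}$, and $\ptl_\xi\mapsto\xi L_{-1}-G_{-\frac{1}{2}}$. The Weyl superalgebra relations reduce to direct computations from the $\fk^+$-action on $A$; for instance $[L_{-1},t]=1$, $\{\xi L_{-1}-G_{-\frac{1}{2}},\xi\}=1$ (using $\{G_{-\frac{1}{2}},\xi\}=-1$ and $\xi^2=0$), and $[L_{-1},\xi L_{-1}-G_{-\frac{1}{2}}]=0$ (using $[L_{-1},G_{-\frac{1}{2}}]=0$). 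For $\iota$ to factor through the tensor product, $\cK$ and $U(\cT)$ must supercommute inside $\overline{U}^+$; Lemma \ref{basis}(2) supplies this for the $A$-part of $\cK$ and for $G_{-\frac{1}{2}}$, and it remains to verify $[L_{-1},L'_n]=0=[L_{-1},G'_{n-\frac{1}{2}}]$, which are binomial identities exactly parallel to those worked out in the proof of Lemma \ref{basis}(2).

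Surjectivity is immediate from the inversion formulas. The algebra $\overline{U}^+$ is generated by $A$ and $\{L_n,G_{n-\frac{1}{2}}:n\geq-1\}$; the elements of $A$ together with $L_{-1}=\ptl_t$ and $G_{-\frac{1}{2}}=\xi L_{-1}-\ptl_\xi$ lie in the image of $\cK\otimes 1$, while for $n\geq 0$ the identities \eqref{L} and \eqref{G} express $L_n$ and $G_{n-\frac{1}{2}}$ as $A$-combinations of $L'_k,G'_{k-\frac{1}{2}},L_{-1},G_{-\frac{1}{2}}$, all of which lie in $\iota(\cK\otimes U(\cT))$.

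Injectivity is the main step, handled via a PBW-type filtration. Filter $\overline{U}^+$ by total $\fk^+$-degree, placing $A$ in degree $0$; since $I^+$ is generated by relations of $\fk^+$-degree zero, the PBW theorem for $\widetilde{\fk}^+=\fk^+\ltimes A$ descends to show that the associated graded of $\overline{U}^+$ is the free supercommutative $A$-algebra $A\otimes S(\fk^+)$. Equip $\cK\otimes U(\cT)$ with the analogous filtration ($A$ in degree $0$ and $\ptl_t,\ptl_\xi,\cT$ in degree $1$); its associated graded is $A\otimes S(\bC\ptl_t\oplus\bC\ptl_\xi\oplus\cT)$. The map $\iota$ preserves the filtration, so it suffices to show $\mathrm{gr}(\iota)$ is an isomorphism, and since both sides are free supercommutative $A$-algebras generated in degree $1$, this reduces to checking the induced $A$-linear map on the degree-$1$ generators, $A\otimes(\bC\ptl_t\oplus\bC\ptl_\xi\oplus\cT)\to A\cdot\fk^+$. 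By Lemma \ref{basis}(1) the target is $A$-free on $\widetilde{\fB}=\fB\cup\{L_{-1},G_{-\frac{1}{2}}\}$; the map in question is the identity on $\fB$ together with $\ptl_t\mapsto L_{-1}$, $\ptl_\xi\mapsto\xi L_{-1}-G_{-\frac{1}{2}}$, whose $2\times 2$ change-of-basis matrix $\begin{pmatrix}1&\xi\\0&-1\end{pmatrix}$ has determinant $-1$ and is $A$-invertible, completing the proof.

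The statement for $\cK^+\otimes U(\cT)\cong\overline{U}^{++}$ follows identically, replacing $A$ by $A^+$ and using Lemma \ref{basis}(1) in its $A^+$-form. The main obstacle I anticipate is the PBW description of $\overline{U}^+$ itself: one must verify that the quotient by $I^+$ is compatible with the $\fk^+$-filtration (which it is because $I^+$ is generated in $\fk^+$-degree zero), so the associated graded is genuinely the supercommutative $A$-algebra $A\otimes S(\fk^+)$. Once this structural point is secured, the proof reduces cleanly to the $A$-basis statement of Lemma \ref{basis}(1).
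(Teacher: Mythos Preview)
Your proof is correct, but the route to injectivity differs from the paper's. The paper does not filter and pass to the associated graded; instead it constructs an explicit inverse. Working with $A[G_{-\frac{1}{2}}]$ (the associative subalgebra of $\overline{U}^+$ generated by $A$ and $G_{-\frac{1}{2}}$, which coincides with your image of $\cK$), the paper observes that the restriction of $\iota'$ to the Lie superalgebra $\fL=A\otimes\cT+(A[G_{-\frac{1}{2}}]+A)\otimes\bC$ is, by Lemma~\ref{basis}(1) and equations~\eqref{L}--\eqref{G}, a bijection onto $A\cdot\fk^{+}+A$. Its inverse $\eta$, restricted to $\widetilde{\fk}^+$, then extends by the universal property of $U(\widetilde{\fk}^+)$ to an associative superalgebra map $\bar\eta:\overline{U}^+\to A[G_{-\frac{1}{2}}]\otimes U(\cT)$ which is inverse to $\iota'$.

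Both arguments ultimately rest on Lemma~\ref{basis}(1). Your filtration approach is more structural but, as you note, requires justifying $\mathrm{gr}(\overline{U}^+)\cong A\otimes S(\fk^+)$; this follows from the semidirect-product PBW decomposition $U(\widetilde{\fk}^+)\cong U(\fk^+)\otimes_{\bC}U(A)$ as right $U(A)$-modules together with the fact that the generators of $I^+$ lie in $U(A)$. The paper's approach trades this structural verification for the explicit formula for $\eta$ on each $L_n$ and $G_{n-\frac{1}{2}}$. One minor simplification available to you: the separate check that $[L_{-1},\cT]=0$ is unnecessary, since $L_{-1}=-\tfrac{1}{2}[G_{-\frac{1}{2}},G_{-\frac{1}{2}}]$ already lies in the associative subalgebra generated by $A$ and $G_{-\frac{1}{2}}$, which supercommutes with $\cT$ by Lemma~\ref{basis}(2).
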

\begin{proof}
Consider the map $\iota': A[G_{-\frac{1}{2}}]\otimes U(\cT)\to\overline{U}^+,\,\iota(x\otimes y)=x\cdot y$. Since $\cT$  is a Lie supersubalgebra of $\overline{U}^+$ and $A[G_{-\frac{1}{2}}]$ is an associative supersubalgebra of $\overline{U}^+$, the restrictions of $\iota'$ on $A[G_{-\frac{1}{2}}]$ and $U(\cT)$ are well-defined. From Lemma \ref{basis}, in $\overline{U}$, $\iota'(U(\cT))$ and $\iota'(A[G_{-\frac{1}{2}}])$ are super commutative, hence $\iota'$ is a well-defined homomorphism of associative superalgebras. Let $\fL=A\otimes\cT+(A[G_{-\frac{1}{2}}]+A)\otimes\bC\subseteq A[G_{-\frac{1}{2}}]\otimes U(\cT)$. Equations \eqref{L} and \eqref{G} tell us that $\iota'|_{\fL}:\fL\to A\cdot \fk^++A$ is a Lie superalgebra isomorphism. Thus, we have a Lie superalgebra homomorphism $\eta:\widetilde{\fk}^+\xrightarrow{(\iota'|_\fL)^{-1}}\fL\subseteq A[G_{-\frac{1}{2}}]\otimes U(\cT)$ with
\begin{align*}
\eta(t^n\xi^r)=&t^n\xi^r\otimes1, n\in\bZ, r=0,1;\\
\eta(L_n)=&\sum\limits_{k=1}^n(-1)^k\binom{n+1}{k+1}(t^{n-k}\otimes L'_k-\frac{k+1}{2}t^{n-k}\xi\otimes G'_{k-\frac{1}{2}})+(n+1)t^n\otimes L'_0\\
&-\frac{n+1}{2}t^n\xi\cdot G_{-\frac{1}{2}}\otimes1+t^{n+1}\cdot L_{-1}\otimes1, n\geq-1;\\
\eta(G_{n-\frac{1}{2}})=&\sum\limits_{k=1}^n(-1)^k\binom{n}{k}(t^{n-k}\otimes G'_{k-\frac{1}{2}}-2t^{n-k}\xi\otimes L'_{k-1})+t^n\cdot G_{-\frac{1}{2}}\otimes1-2t^n\xi\cdot L_{-1}\otimes1.
\end{align*}
So we have the associative superalgebra homomorphism $\tilde{\eta}:U^+\to A[G_{-\frac{1}{2}}]\otimes U(\cT)$ with $\tilde{\eta}|_{\widetilde{K}^+}=\eta$ and $I^+\subseteq\Ker\tilde{\eta}$. Therefore, we have the induced associative superalgebra homomorphism $\bar{\eta}: \overline{U}^+\to A[G_{-\frac{1}{2}}]\otimes U(\cT)$ with $\bar{\eta}=\iota'^{-1}$. Hence $\iota'$ is an isomorphism and lemma follows from the fact that $A[G_{-\frac{1}{2}}]$ is isomorphic to $\cK$.
\end{proof}

\begin{lemm}
We have the Lie superalgebra isomorphism $\cT\cong \fk_{\ge0}$.
\end{lemm}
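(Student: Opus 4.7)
The plan is to define a linear isomorphism $\sigma\colon \fk_{\ge 0}\to\cT$ sending $L_n\mapsto L'_n$ and $G_{m-\frac{1}{2}}\mapsto G'_{m-\frac{1}{2}}$ (up to a scalar twist on the odd generators, clarified at the end) and to verify it preserves the Lie super-bracket. Since $\fB$ is a basis of $\cT$ by Lemma \ref{basis}(1), any such choice is automatically a vector-space isomorphism, so the content lies entirely in matching brackets.

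The main structural input is a $\tfrac{1}{2}\bZ_{\ge 0}$-grading on $\cT$ coming from $\mathrm{ad}\, L_0$. A short super-Jacobi argument using $[L_0, A]\subseteq A$ and $[L_0, G_{-1/2}]\in \bC G_{-1/2}$ shows $\mathrm{ad}\, L_0$ preserves the centralizer $\cT$ (Lemma \ref{basis}(2)), and a direct computation gives $[L_0, L'_n] = n\, L'_n$ and $[L_0, G'_{m-\frac{1}{2}}] = (m-\tfrac{1}{2})\, G'_{m-\frac{1}{2}}$. Since the weights in $\fB$ are pairwise distinct, each graded piece of $\cT$ is one-dimensional, and $\cT$ being a Lie subalgebra of $\overline{U}^+$ (Lemma \ref{basis}(2)) forces
\[
[L'_m, L'_n] = c_1 L'_{m+n}, \qquad [L'_m, G'_r] = c_2 G'_{m+r}, \qquad [G'_r, G'_s] = c_3 L'_{r+s}
\]
for some scalars $c_1,c_2,c_3$ to be determined.

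To pin these down I would expand each bracket in the basis $\widetilde{\fB}$ of $A\cdot \fk^+$ and extract the coefficient of the unique leading monomial, namely $L_{m+n}$, $G_{m+r}$, or $L_{r+s}$ with no $A$-prefactor. The explicit formulas show that $L'_k$ contains $(-1)^k L_k$ as its only summand of $t$-degree zero and $\xi$-degree zero, and similarly $G'_{m-\frac{1}{2}}$ contains $(-1)^m G_{m-\frac{1}{2}}$. Using $[L_p, t^b] = b\, t^{p+b}$ and $[G_r, t^b] = b\, t^{r+b-\frac{1}{2}}\xi$ (and the analogous $\xi$-commutators), a $t$-degree count shows that the bracket of any non-leading pair retains either a positive $t$-prefactor or a $\xi$, so only the leading-leading commutator contributes. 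This yields $c_1 = n-m$, $c_2 = r - m/2$, $c_3 = 2$.

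Comparing with $\fk_{\ge 0}$, whose odd-odd bracket is $[G_r, G_s] = -2\, L_{r+s}$, the computed structure on $\cT$ agrees up to a sign on the $G$-bracket. This is absorbed by rescaling the odd generators: $\sigma(L_n) = L'_n$, $\sigma(G_r) = \sqrt{-1}\, G'_r$ (or equivalently $\sigma(L_n) = (-1)^n L'_n$, $\sigma(G_r) = (-1)^{r-\frac{1}{2}} G'_r$) is a Lie superalgebra isomorphism. The main technical obstacle is the bookkeeping needed to justify the "only leading-leading contributes" claim, but this reduces to the elementary $t$-degree argument just sketched.
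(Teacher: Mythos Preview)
Your proposal is correct, and your leading-term extraction does work: every term $f\cdot X$ appearing in $L'_m$ or $G'_{r}$ has $f\in A^+$ with either positive $t$-degree or a $\xi$, except for the single summand with $f=1$; and in $[fX,gY]=fg[X,Y]\pm f(X\circ g)Y\mp g(Y\circ f)X$ the last two pieces always pick up positive $A^+$-degree (using that the total $L_0$-degree of $fX$ is $m$ and of $gY$ is $n$, so e.g.\ $f(X\circ g)$ has $t$-degree $m+\deg_t g\geq 1$ whenever $g\ne 1$). So only the $f=g=1$ piece survives, giving $c_1=n-m$, $c_2=r-\tfrac{m}{2}$, $c_3=2$ exactly as you claim, and your sign-twisted $\sigma$ is an isomorphism.

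The paper's argument is organized differently. Rather than computing structure constants, it constructs the inverse map conceptually as a quotient: from \eqref{L}--\eqref{G} one has $\fk_{\ge0}\subseteq\fm L_{-1}+\fm_1 G_{-\frac12}+A^+\cdot\cT$ (where $\fm=tA^+$, $\fm_1=\fm+\bC\xi$), and the composite
\[
\fk_{\ge0}\hookrightarrow\fm L_{-1}+\fm_1 G_{-\frac12}+A^+\cdot\cT\twoheadrightarrow(A^+\cdot\cT)/(\fm_1\cdot\cT)\cong\cT
\]
is automatically a Lie superalgebra homomorphism once one checks the kernel $\fm L_{-1}+\fm_1 G_{-\frac12}+\fm_1\cdot\cT$ is an ideal. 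Reading off the image of $L_n$ and $G_{n+\frac12}$ from \eqref{L}--\eqref{G} then gives $\omega(L_n)=(-1)^nL'_n$, $\omega(G_{n+\frac12})=(-1)^{n+1}G'_{n+\frac12}$. The two approaches are really the same idea viewed from different angles: the paper's ``quotient by $\fm_1$'' is exactly your ``take the $t^0\xi^0$ coefficient'', but the quotient framing gives the Lie-homomorphism property for free, while your route trades that for a short explicit degree count and avoids having to verify the ideal condition.
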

\begin{proof}
Consider the linear map $\psi: \cT\to \fk_{\ge0}$ defined by $\psi(L'_n)\to(-1)^nLn,\psi(G'_{n+\frac{1}{2}})=(-1)^{n+1}G_{n+\frac{1}{2}}$. Then clearly $\psi$ is an isomorphism of vector superspaces. From \eqref{L} and \eqref{G}, we have $\fk_{\ge0}\subseteq\fm\cdot L_{-1}+\fm_1\cdot G_{-\frac{1}{2}}+A^+\cdot\cT$, where $\fm=A^+t$ and $\fm_1=\fm+\bC\xi$. Hence, we have the Lie superalgebra homomorphism
$\omega:\fm \fk^+\subseteq\fm\cdot L_{-1}+\fm_1\cdot G_{-\frac{1}{2}}+A^+\cdot\cT\to(\fm\cdot L_{-1}+\fm_1\cdot G_{-\frac{1}{2}}+A^+\cdot\cT)/(\fm\cdot L_{-1}+\fm_1\cdot G_{-\frac{1}{2}}+\fm_1\cdot\cT)\to(A^+\cdot\cT)/(\fm_1\cdot\cT)\to\cT
$. More precisely, we have $\omega(L_n)=(-1)^nL'_n,\omega(G_{n+\frac{1}{2}})=(-1)^{n+1}G_{n+\frac{1}{2}}$. So $\psi=\omega^{-1}$ and $\omega$ is a Lie superalgebra isomorphism, so is $\psi$.
\end{proof}

The following results on simple weight $\cK$ modules and simple weight $\cK^+=\bC[t,\xi,\ptl_t,\ptl_\xi]$ modules, on which $t\ptl_t$ acts diagonalizable, is needed.
\begin{lemm}\label{weyl}
\begin{enumerate}
\item Up to a parity change, any simple weight $\cK$ module is isomorphic to some strictly simple $\cK$ module $A(\lambda):=t^\lambda\bC[t^{\pm1},\xi]$ for some $\lambda\in\bC$.
\item Up to a parity change, any simple weight $\cK^+$ module is isomorphic to one of the following strictly simple $\cK^+$ module:
    \[
    t^\lambda\bC[t^{\pm1},\xi], \bC[t,\xi], \bC[t^{\pm1},\xi]/\bC[t],
    \]
    where $\lambda\in\bC\setminus\bZ$.
\end{enumerate}
\end{lemm}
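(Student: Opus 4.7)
My plan is to factor the problem along the tensor decompositions
\[
\cK\;\cong\; B\otimes\cH,\qquad \cK^+\;\cong\; B^+\otimes\cH,
\]
of associative superalgebras, where $B=\bC[t^{\pm1},\partial_t]$ and $B^+=\bC[t,\partial_t]$ are the purely even Weyl algebras in $t$, and $\cH$ is the $4$-dimensional superalgebra generated by the odd elements $\xi,\partial_\xi$ subject to $\xi^2=\partial_\xi^2=0$ and $\xi\partial_\xi+\partial_\xi\xi=1$. This factorisation is valid because the even generators $t,\partial_t$ supercommute with the odd generators $\xi,\partial_\xi$.

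I would first observe that via the orthogonal idempotents $\xi\partial_\xi$ and $\partial_\xi\xi$, the superalgebra $\cH$ is isomorphic as an ungraded algebra to $M_2(\bC)$, so it has up to parity a single strictly simple module, the $2$-dimensional $S=\bC\oplus\bC\xi$ on which $\xi,\partial_\xi$ act in the obvious way. Given any simple weight $\cK$-module $M$, the restriction of $M$ to the finite dimensional factor $\cH$ contains a strictly simple $\cH$-submodule, which after possibly replacing $M$ by $\Pi(M)$ is isomorphic to $S$. An application of Lemma \ref{tensor}(2)(b) with $B'=\cH$ and $M'=S$ then produces $M\cong N\otimes S$ for some simple $B$-module $N$. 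Since $t\partial_t$ lies in $B$, the eigenspace decomposition of $M$ under $t\partial_t$ is inherited from one on $N$, so $N$ must itself be a simple weight $B$-module on which $t\partial_t$ acts diagonalisably. Identical reasoning applies to $\cK^+$ with $B^+$ replacing $B$.

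The lemma thus reduces to the classical classification of simple weight modules over the plain Weyl algebras $B$ and $B^+$. For $B$, the invertibility of $t$ makes $t$ a weight-shifting isomorphism, so by simplicity every weight space is one-dimensional and $N\cong A(\lambda)=t^\lambda\bC[t^{\pm1}]$ for a unique $\lambda\in\bC/\bZ$. For $B^+$, the relations $t\partial_t=\mu$ and $\partial_t t=\mu+1$ on a weight space $N_\mu$ force $\ker\partial_t\subseteq N_0$ and $\ker t\subseteq N_{-1}$; a short case analysis on whether $N_0$ and $N_{-1}$ are zero, and on whether $\partial_t$ is injective on $N_0$, recovers exactly the three candidates $t^\lambda\bC[t^{\pm1}]$ with $\lambda\notin\bZ$, $\bC[t]$, and $\bC[t^{\pm1}]/\bC[t]$. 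Tensoring each with $S$ then produces the lists in the lemma. The one genuinely delicate step I anticipate is this last case analysis for $B^+$: to rule out the remaining configuration in which $N_0,N_{-1}$ are both nonzero with $\partial_t$ injective on $N_0$, I would show that $\partial_t(N_0)\subseteq\ker t$ generates a proper submodule isomorphic to $\bC[t^{\pm1}]/\bC[t]$, contradicting simplicity.
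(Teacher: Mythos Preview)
Your approach is correct and essentially identical to the paper's: both factor $\cK^+\cong\bC[t,\partial_t]\otimes\bC[\xi,\partial_\xi]$, locate a strictly simple $\bC[\xi,\partial_\xi]$-submodule (the paper does this by picking a homogeneous vector annihilated by $\partial_\xi$, you via the $M_2(\bC)$ picture), invoke Lemma~\ref{tensor}(2)(b), and reduce to classifying simple weight modules over the bosonic Weyl algebra. The only difference is cosmetic: the paper cites \cite{LX} for part (1) and \cite{FGM} for the $B^+$-classification, whereas you sketch both directly; your case analysis for $B^+$ is sound.
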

\begin{proof}
The first statement is just Lemma 3.5 in \cite{LX}. For the second statement, it is easy to check the giving modules are strictly simple $\cK^+$ modules. Now let $V$ be a simple weight $\cK^+$ module with $\lambda\in\Supp(V)$. For a fixed nonzero homogeneous element $v\in V_\lambda$, since $V'=\bC[\ptl_\xi]v$ is finite dimensional with $\ptl_\xi$ acting nilpotently, we can find a nonzero homogeneous element $v'\in V'$ such that $\ptl_\xi v'=0$. Then $\bC[\xi,\ptl_\xi]v'=\bC[\xi]v'$ is a strictly simple $\bC[\xi,\ptl_\xi]$ module. So lemma follows from Lemma \ref{tensor} and the facts that $\cK^+\cong\bC[t,\ptl_t]\otimes\bC[\xi,\ptl_\xi]$ and  any simple weight $\bC[t,\ptl_t]$ module is isomorphic one of the following (cf. \cite{FGM}): $t^\lambda\bC[t^{\pm1}],\bC[t],\bC[t^{\pm1}]/\bC[t]$.
\end{proof}

Now for any $\fk_{\ge0}$ module, we have the $A\fk^+$ module $\Gamma(\lambda,V):=(A(\lambda)\otimes V)^\varphi$, where $\varphi: \overline{U}^+\xrightarrow{\iota^{-1}}\cK\otimes U(\cT)\xrightarrow{\mathrm{id}\otimes\psi}\cK\otimes U(\fk_{\ge0})$ and the $A^+\fk^+$ modules $\cF(P,V)=(P\otimes V)^{\varphi|_{\overline{U}^{++}}}$ with $P$ a simple weight $\cK^+$ module. More precisely, $\Gamma(\lambda,V)=A\otimes V$ ($\cF(P,V)=P\otimes V$, respectively) with actions
\begin{align*}
x\cdot(y\otimes v)=&xy\otimes v,\\
L_n\cdot(y\otimes v)=&\sum\limits_{k=1}^{n}\binom{n+1}{k+1}(t^{n-k}y\otimes L_k\cdot v-\frac{k+1}{2}t^{n-k}\xi y\otimes G_{k-\frac{1}{2}}\cdot v)\\
&+(n+1)t^ny\otimes L_0\cdot v+\frac{n+1}{2}t^n\xi\ptl_\xi(y)\otimes v+t^{n}(t\ptl_t(y)+\lambda y)\otimes v,\\
G_{n+\frac{1}{2}}\cdot(y\otimes v)=&\sum\limits_{k=1}^{n+1}\binom{n+1}{k}(t^{n+1-k}y\otimes G_{k-\frac{1}{2}}\cdot v-2t^{n+1-k}\xi y\otimes L_{k-1}\cdot v)\\
&-t^{n}\xi (t\ptl_t(y)+\lambda y)\otimes v-t^{n+1}\ptl_\xi(y)\otimes v,
\end{align*}
where $x\in A$ ($A^+$, respectively), $y\in A$ ($P$, respectively), $n\in\bZ_{\geq-1}$ and $v\in V$.

\begin{lemm}
\begin{enumerate}
\item For any $\lambda\in\bC$ and any simple $\fk_{\geq0}$ module $V$, the $A\fk^+$ module $\Gamma(\lambda,V)$ is simple.
\item For any simple weight $\cK^+$ module $P$ and any simple $\fk_{\geq0}$ module $V$, the $A^+\fk^+$ module $\cF(P,V)$ is simple.
\item Any simple module in $\sJ(\fk^+,A)$ is isomorphic to some $\Gamma(\lambda, V)$ for some $\lambda\in\bC$ and some simple weight $\fk_{\ge0}$ module $V$.
\item  Any simple module in $\sJ(\fk^+,A^+)$ is isomorphic to some $\cF(P,V)$ for some simple weight $\cK^+$ module $P$ and some simple weight $\fk_{\ge0}$ module $V$.
\end{enumerate}
\end{lemm}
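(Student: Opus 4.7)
All four statements reduce, via the isomorphism $\iota:\cK\otimes U(\cT)\to\overline{U}^+$ (and its restriction to $\cK^+\otimes U(\cT)\cong\overline{U}^{++}$) together with the Lie superalgebra isomorphism $\psi:\cT\to\fk_{\ge0}$, to Lemma \ref{tensor} combined with Lemma \ref{weyl}. The first two parts are constructions and the last two are classifications, but both use the same tensor-product picture.

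\textbf{Parts (1), (2).} By definition, $\Gamma(\lambda,V)=A(\lambda)\otimes V$ and $\cF(P,V)=P\otimes V$ carried back through $\iota\circ(\mathrm{id}\otimes\psi^{-1})$ to genuine $\cK\otimes U(\fk_{\ge0})$- (resp.\ $\cK^+\otimes U(\fk_{\ge0})$-) modules. By Lemma \ref{weyl}, the factors $A(\lambda)$ and $P$ are strictly simple over $\cK$ and $\cK^+$ respectively, and both $\cK$ and $\cK^+$ have countable bases. Since $V$ is simple over $U(\fk_{\ge0})$, Lemma \ref{tensor}(2)(a) (applied after a parity swap via Lemma \ref{tensor}(1) so that the strictly simple factor sits in the correct slot) yields simplicity of $\Gamma(\lambda,V)$ and $\cF(P,V)$.

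\textbf{Parts (3), (4).} Let $M$ be a simple module in $\sJ(\fk^+,A)$; via $\iota$, it becomes a simple weight $\cK\otimes U(\fk_{\ge0})$-module. The plan is to locate a strictly simple $\cK$-submodule $M'\subseteq M$; then Lemma \ref{tensor}(2)(b) forces $M\cong N\otimes M'$ for some simple $U(\fk_{\ge0})$-module $N$, and Lemma \ref{weyl}(1) identifies $M'\cong A(\lambda)$ up to parity, so $M\cong\Gamma(\lambda,N)$. To see that $N$ is a weight $\fk_{\ge0}$-module, note that from the formula for $L'_0$ one has $\iota^{-1}(L_0)=1\otimes L_0+(t\ptl_t+\tfrac12\xi\ptl_\xi)\otimes 1$ (modulo the identification $\psi$), so the diagonalizability of $L_0$ on $M$ passes to each tensor factor. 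To construct $M'$ I would pick a nonzero homogeneous weight vector $v$ with $\ptl_\xi v=0$ (possible since $\ptl_\xi^2=0$: replace $v$ by $\ptl_\xi v$ if needed), and study the $\cK$-submodule $\cK v=\bC[t^{\pm1},\ptl_t]v\oplus\xi\bC[t^{\pm1},\ptl_t]v$. Restricting to the even part, which is a $\bC[t^{\pm1},\ptl_t]$-submodule on which $t\ptl_t$ acts locally finitely (deducible from the $L_0$ weight decomposition), one extracts a simple weight $\bC[t^{\pm1},\ptl_t]$-submodule isomorphic to $t^\lambda\bC[t^{\pm1}]$ by the standard classification of simple weight modules over the Weyl algebra; wedging with $\bC[\xi]$ produces a $\cK$-submodule isomorphic to $A(\lambda)$, which is strictly simple. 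Part (4) is treated identically, except that Lemma \ref{weyl}(2) gives three possibilities for $P$ ($t^\lambda\bC[t^{\pm1},\xi]$ with $\lambda\notin\bZ$, $\bC[t,\xi]$, or $\bC[t^{\pm1},\xi]/\bC[t]$), and the extraction argument must account for each by first replacing $v$ with some $\ptl_t^k v$ to land in one of these three forms.

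\textbf{Main obstacle.} The real work is the extraction of a strictly simple $\cK$- (or $\cK^+$-) submodule inside $M$. The naive cyclic submodule $\cK v$ need not be simple, so one must combine the finiteness of the $L_0$-weight spaces, the nilpotence of $\ptl_\xi$, and the decomposition of the $L_0$-action between the two tensor factors to carve out the right weight subquotient. In the $\cK^+$ case the three distinct models of simple weight $\cK^+$-modules must each be realized inside $M$ according to the behavior of $t$ and $\ptl_t$ on the chosen vector, which is the technically most delicate step.
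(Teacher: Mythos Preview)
Your approach is essentially the same as the paper's: both parts rest on the isomorphism $\overline{U}^+\cong\cK\otimes U(\fk_{\ge0})$ (resp.\ $\overline{U}^{++}\cong\cK^+\otimes U(\fk_{\ge0})$), and then Lemma~\ref{tensor} combined with Lemma~\ref{weyl}. Parts (1)--(2) are handled identically.

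For parts (3)--(4), however, you are making the extraction step harder than it needs to be. The paper does not pass through a cyclic $\cK$-module and then hunt for a simple submodule. Instead it observes (exactly your formula $\iota^{-1}(L_0)=(t\ptl_t+\tfrac12\xi\ptl_\xi)\otimes1+1\otimes L_0$) that $t\ptl_t$ \emph{commutes} with $L_0$ as operators on $M$, hence preserves each $L_0$-weight space. Since some weight space $M_\lambda$ is finite-dimensional, one picks a $t\ptl_t$-eigenvector $v$ directly in $M_\lambda$; replacing $v$ by $\ptl_\xi v$ if necessary (noting $[\ptl_\xi,t\ptl_t]=0$) gives simultaneously $(t\ptl_t-\lambda)v=0$ and $\ptl_\xi v=0$. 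Then $\cK v\cong A(\lambda)$ on the nose---the ``naive cyclic submodule'' \emph{is} already strictly simple, so your stated main obstacle disappears. For the $\cK^+$ case the paper proceeds the same way, doing a short case split on the $t\ptl_t$-eigenvalue (non-integer; or integer, in which case iterate $\ptl_t$ or $t$ to reach a vector killed by $\ptl_t$ or by $t$) to land on one of the three generators in Lemma~\ref{weyl}(2). Your suggestion to ``replace $v$ with some $\ptl_t^k v$'' is the right instinct, but the clean way to organize it is precisely this trichotomy on the eigenvalue, rather than analyzing a possibly non-simple cyclic module.

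In short: correct strategy, same as the paper, but you should replace the two-step ``generate, then extract'' argument by the one-step ``find a common eigenvector in a finite-dimensional weight space'' argument; this removes what you flagged as the main obstacle.
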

\begin{proof}
Statement 1 and 2 follow from Lemma \ref{weyl} and Lemma \ref{tensor}. Now let $M\in\sJ(\fk^+,A)$ ($\sJ(\fk^+,A^+)$, respectively) be simple with $\dim M_\lambda<\infty$  for some $\lambda\in\Supp(M)$. Then $M^{\varphi^{-1}}$ is a simple $\cK\otimes U(\fk_{\ge0})$ ($\cK^+\otimes U(\fk_{\ge0})$, respectively) module. If $M\in\sJ(\fk^+,A)$, we can find a common eigenvector $v$ for $t\ptl_t$ and $L_0$ such that $(t\ptl_t-\lambda)v=\ptl_\xi v=0$, and hence $\cK v\cong A(\lambda)$ or $\cK v\cong \Pi(A(\lambda))$. From Lemma \ref{tensor}, there exists a simple $U(\fk_{\ge0})$ module $V$ such that $M^{\varphi^{-1}}\cong A(\lambda)\otimes V$ or $M^{\varphi^{-1}}\cong\Pi(A(\lambda))\otimes V\cong A(\lambda)\otimes\Pi(V)$. The fact that the adjoint action of $L_0$ on $\cK\otimes U(\fk_{\ge0})$ is diagonalizable tells us that $V$ and $\Pi(V)$ is a simple weight $\fk_{\ge0}$ module.

For Statement 4, we can find a nonzero element $u$ such that $(t\ptl_t-\lambda)\cdot u=\ptl_\xi\cdot u=0 (\lambda\notin\bZ)$ or $\ptl_t\cdot u=\ptl_\xi\cdot u=0$ or $t\cdot u=\ptl_\xi\cdot u=0$, and hence $\cK^+u\cong t^\lambda\bC[t^{\pm1},\xi]$ or $\cK^+u\cong\bC[t,\xi]$ or $\cK^+u\cong\bC[t^{\pm1},\xi]/\bC[t]$. And again by Lemma \ref{tensor}, $M^{\varphi^{-1}}=(\cK^+\otimes U(\fk_{\ge0}))u\cong \cK^+u\otimes V$ for some simple $U(\fk_{\ge0})$ module $V$. Moreover, since the adjoint actions of $t\ptl_t$ and $L_0$ on $\cK^+\otimes U(\fk_{\ge0})$ is diagonalizable, $L_0$ is diagonalizable on $M^{\varphi^{-1}}$, and hence $V$ is a simple weight $\fk_{\ge0}$ module.
\end{proof}

\begin{lemm}
Suppose $V$ is a simple weight $\fk_{\ge0}$ module. Then $\fk_{\ge\frac{1}{2}}V=0$. Thus, $V$ can be regarded as a simple weight $\fk_{\geq0}/\fk_{\geq\frac{1}{2}}$ module. Moreover, $V=\bC v$ is one dimensional with $L_0v=bv$ for some $b\in\bC$,  and $L_iv=G_{i-\frac{1}{2}}v=0, \forall i\geq1$.
\end{lemm}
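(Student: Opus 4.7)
The proof rests on the observation that $\fk_{\ge 0}$ is non-negatively $\tfrac{1}{2}\bZ$-graded: every basis element $L_n\ (n\ge 0)$, $G_{r}\ (r\ge \tfrac{1}{2})$ has non-negative adjoint $L_0$-eigenvalue, and strictly positive except for $L_0$ itself. So acting by $U(\fk_{\ge 0})$ can only preserve or raise the $L_0$-weight.

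\textbf{Plan.} Pick any $\mu\in\Supp(V)$ and a nonzero homogeneous $v\in V_\mu$. By simplicity, $V=U(\fk_{\ge 0})v$. Since $U(\fk_{\ge 0})$ is spanned by monomials whose adjoint $L_0$-weight lies in $\tfrac{1}{2}\bZ_+$, this forces
\[
\Supp(V)\subseteq\mu+\tfrac{1}{2}\bZ_+,
\]
so $\mu$ is a minimum of $\Supp(V)$. Now set
\[
V^+:=\bigoplus_{\nu\in\Supp(V),\,\nu>\mu}V_\nu.
\]
I will check $V^+$ is a $\fk_{\ge 0}$-submodule: $L_0$ preserves each weight space, while for $n\ge 1$ and $r\ge\tfrac{1}{2}$ the operators $L_n$, $G_r$ strictly raise weight, so they map $V^+$ into $V^+$; they also map $V_\mu$ into $V^+\cup\{0\}$, but we don't need that yet. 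Since $V_\mu\not\subseteq V^+$, $V^+\ne V$, and simplicity yields $V^+=0$. Hence $V=V_\mu$.

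Once $V$ is concentrated in a single weight, everything of strictly positive degree must act as zero: for $n\ge 1$, $L_nV\subseteq V_{\mu+n}=0$, and for $n\ge 1$, $G_{n-\tfrac{1}{2}}V\subseteq V_{\mu+n-\tfrac{1}{2}}=0$. This is exactly $\fk_{\ge \tfrac{1}{2}}V=0$, so $V$ descends to a simple module over the abelian quotient $\fk_{\ge 0}/\fk_{\ge \tfrac{1}{2}}=\bC L_0$ on which $L_0$ acts by the scalar $b:=\mu$; such a module is one-dimensional, $V=\bC v$ with $L_0v=bv$.

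\textbf{Main obstacle.} There is no substantive obstacle; the only point requiring a moment's care is verifying that $V^+$ is genuinely closed under the odd generators $G_{r}$ (which is immediate once one remembers that $r\ge\tfrac{1}{2}>0$), and that the hypothesis "weight module" (i.e.\ $L_0$-diagonalizable) is used to make the decomposition $V=\bigoplus V_\nu$ legitimate. The entire argument is a standard highest/lowest-weight shortcut made available by the strict positivity of the grading on $\fk_{\ge \tfrac{1}{2}}$.
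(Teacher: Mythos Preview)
Your argument is correct and is essentially the same positivity-of-grading argument as the paper's: both pick a weight vector $v\in V_\lambda$, use $V=U(\fk_{\ge0})v$ to get $\Supp(V)\subseteq\lambda+\tfrac{1}{2}\bZ_+$, and then exhibit a proper submodule forced to vanish by simplicity. The only cosmetic difference is the choice of that submodule: the paper uses $\fk_{\ge\frac{1}{2}}V$ (a submodule because $\fk_{\ge\frac{1}{2}}$ is an ideal of $\fk_{\ge0}$, with support in $\lambda+\tfrac{1}{2}\bN$), whereas you use $V^+=\bigoplus_{\nu>\mu}V_\nu$; since $\fk_{\ge\frac{1}{2}}V\subseteq V^+$, both routes yield $\fk_{\ge\frac{1}{2}}V=0$ immediately.
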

\begin{proof}
Let $V$ be a simple weight $\fk_{\ge0}$ module and $0\neq v\in V_\lambda$ for some $\lambda\in\Supp(V)$. Then $V=U(\fk_{\ge0})v$ and $\Supp(V)\subseteq\lambda+\frac{1}{2}\bZ_+$. On the other hand, $\fk_{\ge\frac{1}{2}}V$ is a submodule of $V$ with support $\lambda+\frac{1}{2}\bN$ so that $v\notin \fk_{\ge\frac{1}{2}}v$. Therefore, from the simplicity of $V$, we have $\fk_{\ge\frac{1}{2}}V=0$.
\end{proof}

\begin{coro}
Let $P$ be a simple weight $\cK$ ($\cK^+$, respectively) module and $V$ be a simple weight $\fk_{\ge0}/\fk_{\ge\frac{1}{2}}$ module. Then $\cF(P,V)$ is a cuspidal $A\fk^+$ ($A^+\fk^+$, respectively) module.
\end{coro}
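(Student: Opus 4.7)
The plan is to reduce cuspidality to an inspection of the $L_0$-action on $\cF(P,V)$, using the one-dimensionality of $V$ supplied by the preceding lemma together with the classification of simple weight $\cK$- and $\cK^+$-modules in Lemma \ref{weyl}.

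First I would record from the preceding lemma that $V=\bC v$ with $L_0v=bv$ and $L_iv=G_{i-\frac{1}{2}}v=0$ for all $i\ge1$, so that as a vector superspace $\cF(P,V)=P\otimes V$ is just $P$ up to a parity twist. Substituting $v$ into the explicit formula for $L_n\cdot(y\otimes v)$ and specializing to $n=0$, every sum collapses because $\fk_{\ge\frac{1}{2}}$ annihilates $v$, and only the $t^0$-coefficient terms survive:
\[
L_0\cdot(y\otimes v)=\Bigl(t\ptl_t(y)+\tfrac{1}{2}\xi\ptl_\xi(y)+(b+\lambda)y\Bigr)\otimes v,
\]
where the constant $\lambda$ is the parameter appearing in the $\Gamma(\lambda,V)$ case and is either absorbed into the $\cK^+$-action on $P$ or absent altogether in the $\cF(P,V)$ case.

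Next I would invoke Lemma \ref{weyl}: up to parity, $P$ is one of the explicit models $A(\mu)=t^\mu\bC[t^{\pm1},\xi]$ (in the $\cK$-case), or one of $t^\mu\bC[t^{\pm1},\xi]$, $\bC[t,\xi]$, $\bC[t^{\pm1},\xi]/\bC[t]$ (in the $\cK^+$-case). Each carries the standard monomial basis $\{t^\alpha,\,t^\alpha\xi\}$ consisting of simultaneous eigenvectors for $t\ptl_t$ and $\xi\ptl_\xi$. The displayed formula then gives the corresponding $L_0$-eigenvalues as $\alpha+b+\lambda$ or $\alpha+b+\lambda+\tfrac{1}{2}$, which are pairwise distinct across the whole basis, so every $L_0$-weight space of $\cF(P,V)$ has dimension at most one.

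In particular $L_0$ acts diagonalizably on $\cF(P,V)$ and the weight-space dimensions are uniformly bounded (by $1$), establishing cuspidality. I do not anticipate a serious obstacle beyond modest book-keeping: once $V$ is reduced to a line, the argument is a direct consequence of the explicit action formulas and the explicit models of Lemma \ref{weyl}. The only point requiring a little care is matching the constant $\lambda$ in the action with the parameter labelling $P$, so that no ambiguous shift is left floating between the two cases.
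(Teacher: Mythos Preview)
Your argument is correct and follows essentially the same line as the paper's: the paper records (in a one-line proof that is actually commented out in the source) only the weight-space decomposition $\cF(P,V)_\lambda=\bigoplus_{\alpha+\beta=\lambda}P_\alpha\otimes V_\beta$, which together with $\dim V=1$ and the explicit models of Lemma~\ref{weyl} gives the bound immediately. You have simply unpacked this by computing the $L_0$-action explicitly and reading off the one-dimensional weight spaces; the minor caveat about the constant $\lambda$ is exactly the point that in the $\cF(P,V)$ case the term $t\partial_t(y)+\lambda y$ should be read as the $\cK^+$-action of $t\partial_t$ on $y\in P$, and you have handled this correctly.
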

\begin{rema}
Clearly, any $\Gamma(\lambda,V)$ is an $A^+\fk^+$ module. It is easy to check as $\Gamma(\lambda,V)$ is simple as $A^+\fk^+$ module if and only if $\lambda\notin\bZ$. And any simple module in $\sJ(\fk^+,A^+)$ is a quotient of some $\Gamma(\lambda,V)$. In particular, any simple module in $\sJ(\fk^+, A^+)$ is cuspidal.
\end{rema}

Now we can classify all simple modules in $\sJ(\fk,A), \sJ(\fk^+,A)$ and $\sJ(\fk^+,A^+)$. 
\begin{theo}
\begin{enumerate}
\item Any simple module in $\sJ(\fk,A)$ as well as in $\sJ(\fk^+,A)$ is isomorphic to the module $\Gamma(\lambda,b)=A$ for some $\lambda,b\in\bC$ with actions: for all $n,k\in\bZ$,
    \begin{align}
    &x\cdot y=xy,\, \forall x, y\in A,\label{actionA}\\
    &L_n\cdot t^k=(\lambda+k+b(n+1))t^{n+k},\\
    &L_n\cdot t^k\xi=(\lambda+k+(n+1)(b+\frac{1}{2}))t^{n+k}\xi,\\
    &G_{n+\frac{1}{2}}\cdot t^k=-(k+\lambda+2b(n+1))t^{n+k}\xi,\\
    &G_{n+\frac{1}{2}}\cdot t^k\xi=-t^{n+k+1}.\label{acionG}
    \end{align}
\item Any simple module in $\sJ(\fk^+,A^+)$ is isomorphic to one of the following
    \begin{enumerate}
    \item $\Gamma(\lambda,b)$ for some $\lambda\in\bC\setminus\bZ$ and $b\in\bC$;
    \item The $A^+\fk^+$ submodule $\Gamma^+(0,b):=A^+$ of $\Gamma(0,b)$ for some $b\in\bC$;
    \item The $A^+\fk^+$ quotient module $\Gamma^-(0,b):=\Gamma(0,b)/\Gamma^+(0,b)$.
    \end{enumerate}
\end{enumerate}
\end{theo}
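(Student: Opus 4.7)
The plan is to combine the preceding lemmas with the explicit structure of simple weight $\fk_{\ge 0}$-modules. Since any such module is $V=\bC v$ with $L_0 v=bv$ and $L_i v=G_{i-\frac{1}{2}}v=0$ for all $i\ge 1$, the generic tensor construction $\Gamma(\lambda,V)$ and $\cF(P,V)$ collapse dramatically: every term in the $k=1,\dots,n$ sums vanishes, leaving only the $k=0$ contribution together with the differential/translation piece on the Weyl-algebra factor.

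For Statement~1, I start from the lemma that every simple module in $\sJ(\fk^+,A)$ is isomorphic to $\Gamma(\lambda,V)$ for some $\lambda\in\bC$ and some simple weight $\fk_{\ge 0}$-module $V$. Substituting $V=\bC v$ into the defining formulas for the $A$-, $L_n$-, and $G_{n+\frac{1}{2}}$-actions on $\Gamma(\lambda,V)=A\otimes\bC v\cong A$ and evaluating $\ptl_t$, $\ptl_\xi$ on the basis $\{t^k, t^k\xi\}$ reproduces exactly the formulas \eqref{actionA}--\eqref{acionG}. The first lemma of this section then transfers the classification from $\sJ(\fk^+,A)$ to $\sJ(\fk,A)$, proving Statement~1.

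For Statement~2, the earlier lemma identifies every simple module in $\sJ(\fk^+,A^+)$ as $\cF(P,V)$ with $P$ a simple weight $\cK^+$-module and $V$ a simple weight $\fk_{\ge 0}$-module. Using the classification of $P$ in Lemma~\ref{weyl}(2) and of $V$ as $\bC v$, I split into three cases corresponding to the three types of $P$. When $P=t^\lambda\bC[t^{\pm 1},\xi]$ with $\lambda\notin\bZ$, the underlying space is $A$ with the $t\ptl_t$-spectrum shifted by $\lambda$, and the same collapse of sums yields $\cF(P,V)\cong\Gamma(\lambda,b)$ as $A^+\fk^+$-module. When $P=\bC[t,\xi]$, the underlying space is $A^+$ and the action coincides with the restriction of $\Gamma(0,b)$ to $A^+$, giving $\Gamma^+(0,b)$. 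When $P=\bC[t^{\pm 1},\xi]/\bC[t]$, the analogous identification yields $\Gamma^-(0,b)$.

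The main obstacle, and the only nontrivial check, is verifying that $\Gamma^+(0,b)=A^+$ is indeed an $A^+\fk^+$-submodule of $\Gamma(0,b)$, i.e., that $L_n\cdot A^+\subseteq A^+$ and $G_{n+\frac{1}{2}}\cdot A^+\subseteq A^+$ for all $n\ge -1$. This is a direct inspection of \eqref{actionA}--\eqref{acionG}: for $n\ge -1$ and $k\ge 0$, the exponents $n+k$ (and $n+k+1$ in the $G$-action) are all non-negative, so no negative powers of $t$ appear. Together with the simplicity assertions from the Remark (which one checks by using $L_{-1}$ and $G_{-\frac{1}{2}}$ to shift $t$-powers in both directions when $\lambda\notin\bZ$, and in one direction when $\lambda=0$), this completes the classification.
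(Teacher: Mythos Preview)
Your proposal is correct and follows exactly the paper's approach: the theorem is stated there without a separate proof, as an immediate consequence of the preceding lemmas on $\Gamma(\lambda,V)$, $\cF(P,V)$, the classification of simple weight $\cK^+$-modules, and the one-dimensionality of simple weight $\fk_{\ge 0}$-modules. One small slip in your submodule check: you claim the exponents $n+k$ are non-negative for $n\ge -1$, $k\ge 0$, but $n=-1$, $k=0$ gives $n+k=-1$; what actually saves the day is that the coefficient $\lambda+k+b(n+1)$ (and its analogues in the other formulas) vanishes at that point precisely because $\lambda=0$, which is why the submodule $\Gamma^+(0,b)$ only exists at integral $\lambda$.
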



\section{Main results}
\label{main}

In this section, we will classify all simple weight $\widehat{\fk}$ modules and $\fk^+$ modules with finite dimensional weight spaces. First of all, from the representation theory of Virasoro algebra, we know that $C$ acts trivially on any simple cuspidal $\widehat{\fk}$ module, and hence the category of simple cuspidal $\widehat{\fk}$ modules is naturally equivalent to the category of simple cuspidal $\fk$ modules. 

The following result is well-known
\begin{lemm}\label{weightupper}
Let $M$ be a weight module with finite dimensional weight spaces for the Virasoro algebra with $\mathrm{supp}(M)\subseteq\lambda+\bZ$. If for any $v\in M$, there exists $N(v)\in\bN$ such that $L_iv=0, \forall i\geq N(v)$, then $\mathrm{supp}(M)$ is upper bounded.
\end{lemm}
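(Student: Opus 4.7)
The plan is by contradiction: assume $\mathrm{supp}(M)$ is not upper bounded. The first key step is the propagation of the annihilation condition: if $v\in M$ satisfies $L_iv=0$ for all $i\geq N$, then a direct application of $[L_i,L_j]=(j-i)L_{i+j}$ shows $L_i(L_jv)=L_jL_iv+(j-i)L_{i+j}v=0$ whenever $i\geq N$ and $j\geq 1$ (since both summands vanish: $L_iv=0$ by assumption, and $L_{i+j}v=0$ because $i+j\geq N$). By induction on the length of monomials, every vector in $U(\mathrm{Vir}_+)v$ is annihilated by $L_i$ for all $i\geq N$. Consequently, the $\mathrm{Vir}_+$-action on the orbit $U(\mathrm{Vir}_+)v$ factors through the finite-dimensional nilpotent Lie algebra $\mathrm{span}(L_1,\ldots,L_{N-1})$ with the appropriately truncated bracket.

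Using this, I would extract a genuine highest weight vector from the orbit $U(\mathrm{Vir}_+)v$ --- a nonzero $w\in U(\mathrm{Vir}_+)v$ with $L_iw=0$ for every $i\geq 1$. Combining the nilpotency of the truncated algebra with finite-dimensionality of the weight subspaces of $M$, one argues by an Engel-type iteration (repeatedly applying an $L_j$ of maximal index with nonzero image, while descending in a carefully chosen measure) that such a $w$ exists with $\mathrm{wt}(w)\geq\mathrm{wt}(v)$. Applying this to weight vectors of weights $\mu_n\to\infty$ (available by the unboundedness assumption) yields a sequence of highest weight vectors $w_n\in M_{\nu_n}$ with $\nu_n\to\infty$.

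Finally, each $w_n$ generates a Virasoro submodule $V_n=U(\mathrm{Vir}_-)w_n\subseteq M$ which is a quotient of the Verma module $V(\nu_n,c_n)$, where $c_n$ is the central-charge scalar on $w_n$. Fix $\mu_0\in\mathrm{supp}(M)$: since $M_{\mu_0}$ is finite-dimensional and the central element $C$ acts semisimply on it, only finitely many central charges can occur among those $V_n$'s contributing nontrivially to $M_{\mu_0}$, so by pigeonhole one may pass to an infinite subsequence with a common value $c_n=c$. By Kac's determinant formula, $V(\nu,c)$ is irreducible for all but countably many $\nu$; hence for infinitely many $n$, $V_n=V(\nu_n,c)$ is the full Verma, and $\dim(V_n)_{\mu_0}=p(\nu_n-\mu_0)\to\infty$. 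Since each $V_n\subseteq M$, this forces $\dim M_{\mu_0}=\infty$, contradicting finite-dimensionality. The main obstacle is the second step --- producing a genuine highest weight vector from an arbitrary weight vector --- since the naive ``take the largest $j$ with $L_jv\neq 0$ and set $w=L_jv$'' does not automatically yield $L_iw=0$ for $i<j$; the refined argument must leverage the nilpotency of the truncated positive Virasoro subalgebra together with finite-dimensionality of the weight spaces to ensure termination of the extraction.
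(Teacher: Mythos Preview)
The paper does not prove this lemma; it is stated as ``well-known'' (the standard reference being \cite{MZ}), so there is no proof in the paper to compare against. Let me instead assess your argument on its own.

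Step 1 is correct, and you are right that Step 2 is the crux. Your Engel-type sketch is incomplete, however: Engel's theorem requires either that the module be finite-dimensional or that each element act locally nilpotently, and neither is established. Concretely, nothing yet prevents $U(\mathrm{Vir}_+)v$ from having unbounded support (e.g.\ the abelian quotient $\mathrm{span}(L_1,L_2)$ acts on $\bC[x]$ by $L_1=x,\ L_2=x^2$ with no annihilated vector). You must use the full Virasoro relations together with the finite-dimensional weight spaces to rule this out; the ``carefully chosen measure'' you allude to needs to be made explicit.

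Step 3 has a genuine gap even granting Step 2. Your pigeonhole on central charges presumes that infinitely many $V_n$ contribute nontrivially to $M_{\mu_0}$, which you have not shown; and across different weight spaces $C$ may have infinitely many (generalized) eigenvalues, so you cannot pigeonhole on the $w_n$ directly. More seriously, after fixing $c$ the Kac determinant only says $M(\nu,c)$ is irreducible off a \emph{countable} set, while your $\nu_n$ already lie in the countable set $\lambda+\bZ$; for special $c$ these could coincide, so ``infinitely many $V_n$ are full Vermas'' is unjustified. A cleaner finish avoiding Verma theory: for large $n$ the vector $u_n:=L_{-1}^{\,\nu_n-\mu_0}w_n\in M_{\mu_0}$ is nonzero (an $\mathfrak{sl}_2$-computation gives $L_1^kL_{-1}^kw_n=(-1)^k k!\,(2\nu_n)(2\nu_n-1)\cdots(2\nu_n-k+1)\,w_n$, which is nonzero once $\nu_n>-\mu_0-1$), and applying $L_1^{\nu_m-\mu_0}$ to a dependence relation with $\nu_m$ maximal kills all terms except the $m$-th, so the $u_n$ are linearly independent in $M_{\mu_0}$, contradicting $\dim M_{\mu_0}<\infty$.
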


Lemma \ref{appN=1} tells us that classification of simple cuspidal modules is an important step in classifying all simple Harish-Chandra modules over $\widehat{\fk}$ and $\fk^+$.
\begin{lemm}\label{appN=1}
\begin{enumerate}
\item Any simple weight $\widehat{\fk}$ module with finite dimensional weight spaces which is not cuspidal is a highest/lowest weight module.
\item Any simple weight $\fk^+$ module with finite dimensional weight spaces is cuspidal.
\end{enumerate}
\end{lemm}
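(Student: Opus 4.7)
The plan is to treat both parts uniformly by identifying a certain submodule of $M$ consisting of vectors killed by sufficiently high positive modes, and then exploiting simplicity. For this, define
\[
T(M) = \{v \in M \mid \exists\, N \in \bN,\ L_n v = G_{n-\frac{1}{2}} v = 0 \text{ for all } n \geq N\},
\]
and, for part (1) only, the mirror
\[
T^-(M) = \{v \in M \mid \exists\, N \in \bN,\ L_{-n} v = G_{-n+\frac{1}{2}} v = 0 \text{ for all } n \geq N\}.
\]
A routine computation with the bracket relations (using that the central $\delta$-terms vanish for sufficiently large indices) shows that $T(M)$ and $T^-(M)$ are $\widehat\fk$-submodules, and $T(M)$ is a $\fk^+$-submodule. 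By simplicity, each is either $0$ or $M$.

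For part (1), assume $M$ is not cuspidal. If $T(M) = M$, every vector is annihilated by $L_i$ for $i \gg 0$, so Lemma \ref{weightupper} applied to the Virasoro subalgebra of $\widehat\fk$ forces $\Supp(M)$ to be upper bounded. A top weight vector $v_0$ is then killed by every positive-degree element of $\widehat\fk$, so $M = U(\widehat\fk) v_0$ is a highest weight module. Symmetrically, $T^-(M) = M$ gives a lowest weight module. The remaining subcase $T(M) = T^-(M) = 0$ would force $M$ to be cuspidal, contradicting the hypothesis --- see the final paragraph.

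For part (2), the only negative modes of $\fk^+$ are $L_{-1}$ and $G_{-\frac{1}{2}}$, so $T^-(M)$ does not arise. In the subcase $T(M) = M$, the proof of Lemma \ref{weightupper} (which uses only non-negative Virasoro modes) applies to $W^+$-modules, yielding a top weight vector $v_0$ with $L_n v_0 = G_{n-\frac{1}{2}} v_0 = 0$ for all $n \geq 1$. Using $[G_{-\frac{1}{2}}, G_{-\frac{1}{2}}] = -2 L_{-1}$,
\[
M = U(\fk^+) v_0 = U(\bC L_{-1} \oplus \bC G_{-\frac{1}{2}}) v_0 = \mathrm{span}\{L_{-1}^i v_0,\, L_{-1}^i G_{-\frac{1}{2}} v_0 : i \in \bZ_+\},
\]
so $\dim M_\mu \leq 2$ uniformly and $M$ is cuspidal. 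In the subcase $T(M) = 0$, cuspidality is established by the same device as in the exceptional subcase of part (1).

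The principal obstacle is the subcase where $T(M) = 0$ (and, for part (1), also $T^-(M) = 0$): one must uniformly bound $\dim M_\mu$ when no vector is killed by high modes. The strategy is to invoke Lemma \ref{Omegaoper} --- part (1) of that lemma for $\widehat\fk$ and part (2) for $\fk^+$ --- combined with commutator identities against the odd generators $G_r$, in the spirit of the computations opening Section \ref{cuspidalAK}. These identities express the action of high $L_n$ and $G_{n-\frac{1}{2}}$ on $M$ in terms of finitely many low modes, modulo operators annihilating $M$, yielding the desired uniform bound on weight multiplicities.
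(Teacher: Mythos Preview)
Your overall skeleton --- defining $T(M)$ (and $T^-(M)$) as submodules and splitting into cases by simplicity --- matches the paper's approach. The cases $T(M)=M$ and $T^-(M)=M$ are handled correctly, and your treatment of the highest-weight $\fk^+$ case is essentially the paper's. But the ``principal obstacle'' case, where you must show $T(M)=0$ (and $T^-(M)=0$) forces cuspidality, contains a genuine circularity.

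You propose to invoke Lemma~\ref{Omegaoper} to produce relations that bound weight multiplicities. However, both parts of Lemma~\ref{Omegaoper} take cuspidality as a \emph{hypothesis}: the $\Omega^{(m)}_{k,s}$ are only known to annihilate cuspidal $W$- or $W^+$-modules (with a given length or multiplicity bound). You cannot use these operators to \emph{establish} cuspidality --- the $m$ in the lemma depends on the very bound you are trying to prove. The computations in Section~\ref{cuspidalAK} that you allude to likewise start from a cuspidal module. So this step is circular and the argument does not close.

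The paper avoids this entirely by arguing the contrapositive with an elementary pigeonhole count. For part~(1): if $M$ is not cuspidal, pick $k$ with $\dim M_{\lambda-k}>2(\dim M_\lambda+\dim M_{\lambda+\frac12}+\dim M_{\lambda+1})$; then the linear map $M_{\lambda-k}\to M_\lambda\oplus M_{\lambda+\frac12}\oplus M_{\lambda+1}$ given by $(L_k,G_{k+\frac12},L_{k+1})$ has nontrivial kernel, producing a nonzero $w$ with $L_kw=L_{k+1}w=G_{k+\frac12}w=0$ and hence (since these generate $\fk_{\ge k}$) $w\in T(M)$. For part~(2), when $T(M)=0$ the paper instead analyzes injectivity of $G_{-\frac12}$ and passes to the restricted dual $M^*$ to force $\Supp(M)$ to be upper bounded. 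You should replace your final paragraph with one of these direct arguments.
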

\begin{proof}
\begin{enumerate}
\item For a simple weight $\widehat{\fk}$ module $M$, fix a $\lambda\in\Supp(M)$. Suppose $M$ is not cuspidal, then there exists a $k\in\frac{1}{2}\bZ$ such that $\dim M_{-k+\lambda}>2(\dim M_\lambda+\dim M_{\lambda+\frac{1}{2}}+\dim M_{\lambda+1})$. Without lost of generality, we may assume that $k\in\bN$. Then there exists a nonzero element $w\in M_{-k+\lambda}$ such that $L_kw=L_{k+1}w=G_{k+\frac{1}{2}}w=0$. Therefore $L_iw=G_{i-\frac{1}{2}}w=0, \forall i\geq (k+1)^2$.

    Since $M'=\{v\in M\,|\, L_iv=G_{i-\frac{1}{2}}v=0, \forall i\gg0\}$ is a nonzero submodule of $M$, we know $M=M'$. And hence by Lemma \ref{weightupper}, $\Supp(M)$ is upper bounded, that is $M$ is a highest weight module.
\item Let $M$ be a simple weight $\fk^+$ module with finite dimensional weight spaces. First we show that if $M$ is not cuspidal, then $M$ is a highest weight module or a lowest weight module. If the action of $G_{-\frac{1}{2}}$ is not injective, then there exists $v\in M$ such that $G_{-\frac{1}{2}}v=0$, and clearly $M$ is a lowest weight module. So we may assume that $G_{-\frac{1}{2}}$ acts injectively on $M$, then we have $\dim M_{i-\frac{1}{2}}\geq\dim M_i$. Consider the restricted dual $M^*:=\bigoplus\limits_{\lambda\in\Supp(M)}\mathrm{Hom}_{\bC}(M_\lambda,\bC)$. Then $M^*$ is a $\fk^+$ module with support $-\Supp(M)$ under
    \[
    (x\cdot f)(v):=-(-1)^{|x||f|}f(xv), \forall x\in (K^+)_i, v\in M_{\lambda+j}, f\in\mathrm{Hom}_\bC(M_{\lambda+i+j},\bC).
    \]
    More precisely, $(M^*)_{-\lambda}=\mathrm{Hom}_{\bC}(M_\lambda,\bC)$. Hence, $\dim(M^*)_{-\lambda}=\dim M_{\lambda}$. Since $M$ is not cuspidal, there exists $\mu$ such that $\dim M_{\mu-\frac{1}{2}}>\dim M_\mu$ and hence the action of $G_{-\frac{1}{2}}$ on $M^*$ is not injective. Therefore, $M^*$ is a lowest weight module, that is $-\Supp(M)$ is lower bounded. So $\Supp(M)$ is upper bounded.

    To complete the proof, it remains to show any simple highest weight $\fk^+$ module is cuspidal. This is clear since from the PBW theorem, as vector spaces, $M\cong\bC[G_{-\frac{1}{2}}]v$, which is cuspidal with $\dim M_\lambda\leq1$ for all $\lambda\in\Supp(M)$.\qedhere
\end{enumerate}
\end{proof}


Now let $(\fg,\cA)$ be $(\fk,A)$ or $(\fk^+,A^+)$ and $M$ be a cuspidal $\fg$ module.
Consider $\fg$ as the adjoint $\fg$ module. Then the tensor product $\fg\otimes M$  becomes an $\cA\fg$ module under
\[
x\cdot (y\otimes u)=(xy)\otimes u, \forall x\in\cA,y\in\fg, u\in M.
\]
Denote $\fK(M)=\{\sum\limits_{i}x_i\otimes v_i\in \fg\otimes M\,|\,\sum\limits_{i}(ax_i)v_i=0, \forall a\in\cA\}$. Then it is easy to see that $\fK(M)$ is an $\cA\fg$ submodule of $\fg\otimes M$. And hence we have the $\cA\fg$ module $\widehat{M}=(\fg\otimes M)/\fK(M)$. Also, we have a $\fg$ module epimorphism defined by
\[\pi: \widehat{M}\to \fg M; x\otimes y+\fK(M)\mapsto xy, \forall x\in \fg, y\in M.\]
$\widehat{M}$ is called the $A$-cover of $M$ if $\fg M=M$.

Let $M$ be a cuspidal $\fk$ module. Then $M$ is a cuspidal $W$ module and hence there exists $m\in\bN$ such that for all $k,p\in\bZ$, $\Omega_{k,p}^{(m)}M=0$. Therefore $[\Omega_{k,p}^{m},G_j]M=0, \forall k,p\in\bZ, j\in\frac{1}{2}+\bZ$. Then, on $M$ we have
\begin{align*}
0=&[\Omega_{k,p-1}^{(m)},G_{j+1}]-2[\Omega_{k,p}^{(m)},G_j]+[\Omega_{k,p+1}^{(m)},G_{j-1}]-[\Omega_{k+1,p-1}^{(m)},G_{j}]\\
&+2[\Omega_{k+1,p}^{(m)},G_{j-1}]-[\Omega_{k+1,p+1}^{(m)},G_{j-2}]\\
=&[\sum\limits_{i=0}^m(-1)^i\binom{m}{i}L_{k-i}L_{p-1+i},G_{j+1}]-2[\sum\limits_{i=0}^m(-1)^i\binom{m}{i}L_{k-i}L_{p+i},G_j]\\
&+[\sum\limits_{i=0}^m(-1)^i\binom{m}{i}L_{k-i}L_{p+1+i},G_{j-1}]-[\sum\limits_{i=0}^m(-1)^i\binom{m}{i}L_{k+1-i}L_{p-1+i},G_{j}]\\
&+2[\sum\limits_{i=0}^m(-1)^i\binom{m}{i}L_{k+1-i}L_{p+i},G_{j-1}]-[\sum\limits_{i=0}^m(-1)^i\binom{m}{i}L_{k+1-i}L_{p+1+i},G_{j-2}]\\
=&\sum\limits_{i=0}^{m}(-1)^i\binom{m}{i}\Big((j+1-\frac{k-i}{2})G_{k-i+j+1}L_{p-1+i}+(j+1-\frac{p-1+i}{2})L_{k-i}G_{p+i+j}\\
&-2(j-\frac{k-i}{2})G_{k-i+j}L_{p+i}-2(j-\frac{p+i}{2})L_{k-i}G_{p+i+j}+(j-1-\frac{k-i}{2})G_{k-i+j-1}L_{p+i+1}\\
&+(j-1-\frac{p+i+1}{2})L_{k-i}G_{p+i+j}-(j-\frac{k-i+1}{2})G_{k-i+j+1}L_{p+i-1}\\
&-(j-\frac{p+i-1}{2})L_{k-i+1}G_{p+i+j-1}+2(j-1-\frac{k-i+1}{2})G_{k-i+j}L_{p+i}\\
&+2(j-1-\frac{p+i}{2})L_{k-i+1}G_{p+i+j-1}-(j-2-\frac{k-i+1}{2})G_{k-i+j-1}L_{p+i+1}\\
&-(j-2-\frac{p+i+1}{2})L_{k+1-i}G_{p+i+j-1}\Big)\\
=&\frac{3}{2}\sum\limits_{i=0}^{m}(-1)^i\binom{m}{i}(G_{k-i+j+1}L_{p+i-1}-2G_{k-i+j}L_{p+i}+G_{k-i+j-1}L_{p+i+1})\\
=&\frac{3}{2}\sum\limits_{i=0}^{m+2}(-1)^i\binom{m+2}{i}G_{k-i+j+1}L_{p+i-1}.
\end{align*}
Similarly, replacing $k$ by $k+m-1$ with $k,p\in\bZ_+$, we know that any cuspidal $\fk^+$ module is annihilated by $\sum\limits_{i=0}^m(-1)^i\binom{m+2}{i}G_{m+k-i+j}L_{p+i-1}$ for some $m\in\bN$ and for all $k,p\in\bZ_+$. That is we have
\begin{lemm}\label{omega}
\begin{enumerate}
\item Any cuspidal $\fk$ module is annihilated by $\sum\limits_{i=0}^m(-1)^i\binom{m}{i}G_{k-i}L_{p+i}$ for some $m\in\bN$ and for all $k\in\frac{1}{2}+\bZ,p\in\bZ$.
\item Any cuspidal $\fk^+$ module is annihilated by $\sum\limits_{i=0}^m(-1)^i\binom{m}{i}G_{m+k-i-1}L_{p+i-1}$ for some $m\in\bN$ and for all $k\in\frac{1}{2}+\bZ_+,p\in\bZ_+$.
\end{enumerate}
\end{lemm}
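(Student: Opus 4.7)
The plan is to bootstrap from the pure Virasoro annihilator of Lemma \ref{Omegaoper} to the mixed Virasoro--fermion annihilator in the statement. Since any cuspidal $\fk$-module $M$ is in particular a cuspidal module over the Witt subalgebra $W\subseteq\fk$, Lemma \ref{Omegaoper}(1) supplies $m_0\in\bN$ with $\Omega^{(m_0)}_{k,p}M=0$ for all $k,p\in\bZ$. Consequently, for every $G_j$, the commutator $[\Omega^{(m_0)}_{k,p},G_j]$ also annihilates $M$: both $\Omega^{(m_0)}_{k,p}G_jM\subseteq\Omega^{(m_0)}_{k,p}M=0$ and $G_j\Omega^{(m_0)}_{k,p}M=0$. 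So I have at my disposal a two-parameter family of cubic Virasoro--fermion operators killing $M$.

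Expanding $[\Omega^{(m_0)}_{k,p},G_j]$ via $[L_a,G_b]=(b-\tfrac{a}{2})G_{a+b}$ produces two shapes of summands: the \emph{good} terms $G_{k-i+j}L_{p+i}$ and $G_{k-i+j}L_{p+i}$, and the \emph{bad} terms $L_{k-i}G_{p+i+j}$ and $L_{p+i}G_{k-i+j}$, each weighted by a linear-in-$i$ scalar. My strategy is to find a small $\bZ$-linear combination of the operators $[\Omega^{(m_0)}_{k+\alpha,p+\beta},G_{j+\gamma}]$ in which every bad term cancels, leaving a pure $G_*L_*$ expression with binomial coefficients.

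Motivated by the need to neutralize both the coefficient-dependence on $i$ and the dependence on $j$ coming from $[L,G]$, the natural guess is the six-term second-difference combination
\[
[\Omega^{(m_0)}_{k,p-1},G_{j+1}]-2[\Omega^{(m_0)}_{k,p},G_j]+[\Omega^{(m_0)}_{k,p+1},G_{j-1}]-[\Omega^{(m_0)}_{k+1,p-1},G_j]+2[\Omega^{(m_0)}_{k+1,p},G_{j-1}]-[\Omega^{(m_0)}_{k+1,p+1},G_{j-2}].
\]
In this combination all total $G$-indices equal $k+p+j$, so the bad $L_*G_*$ terms pair off, while the good terms assemble by the Pascal-type identity $\binom{m_0}{i}+2\binom{m_0}{i-1}+\binom{m_0}{i-2}=\binom{m_0+2}{i}$ (after re-indexing the three shifted copies $G_{k-i+j+1}L_{p+i-1},\ G_{k-i+j}L_{p+i},\ G_{k-i+j-1}L_{p+i+1}$) into $\sum_{i=0}^{m_0+2}(-1)^i\binom{m_0+2}{i}G_{k-i+j+1}L_{p+i-1}$ up to an overall scalar. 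Setting $m:=m_0+2$ and relabeling the free parameters $(k,p,j)$ gives part (1).

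For part (2), I would run exactly the same recipe starting from $\Omega^{(m_0)}_{k+m_0-1,s-1}$ supplied by Lemma \ref{Omegaoper}(2); the offsets $+m_0-1$ and $-1$ are precisely what is needed so that, after the six index-shifts by $0$ or $\pm 1$, every $L$- and $G$-index that appears remains in the range required by $\fk^+$. The main obstacle is combinatorial rather than conceptual: guessing the weights $(1,-2,1,-1,2,-1)$ and the shift pattern that simultaneously annihilates the $LG$ tail. Once that combination is identified, the remaining work is pure bookkeeping — tracking the coefficients $b-\tfrac{a}{2}$ from $[L_a,G_b]$ and applying the binomial identity above — and no additional ideas are required.
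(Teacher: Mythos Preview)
Your proposal is correct and matches the paper's proof essentially line for line: the paper uses precisely the six-term combination $[\Omega^{(m_0)}_{k,p-1},G_{j+1}]-2[\Omega^{(m_0)}_{k,p},G_j]+[\Omega^{(m_0)}_{k,p+1},G_{j-1}]-[\Omega^{(m_0)}_{k+1,p-1},G_j]+2[\Omega^{(m_0)}_{k+1,p},G_{j-1}]-[\Omega^{(m_0)}_{k+1,p+1},G_{j-2}]$ with weights $(1,-2,1,-1,2,-1)$, expands via $[L_a,G_b]=(b-\tfrac a2)G_{a+b}$, observes the $L_*G_*$ terms cancel, and collapses the surviving $G_*L_*$ terms via the Pascal identity to $\tfrac32\sum_{i=0}^{m_0+2}(-1)^i\binom{m_0+2}{i}G_{k-i+j+1}L_{p+i-1}$. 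Part (2) is then dispatched in one sentence exactly as you outline, by shifting indices so everything stays in $\fk^+$.
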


\begin{lemm}\label{covercuspidal}
Let $\fg$ be $\fk$ or $\fk^+$. For any simple cuspidal $\fg$ module $M$, $\widehat{M}$ is cuspidal.
\end{lemm}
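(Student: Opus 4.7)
The plan is to exploit the $\Omega$-type annihilator relations furnished by Lemmas \ref{Omegaoper} and \ref{omega} to exhibit enough elements of $\fK(M)$ so that $\widehat{M}=(\fg\otimes M)/\fK(M)$ has uniformly bounded weight multiplicities.

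First, since $M$ is cuspidal, I would apply Lemma \ref{Omegaoper} (part (1) when $\fg=\fk$, part (2) when $\fg=\fk^+$) together with Lemma \ref{omega} to select one integer $m\in\bN$ such that the operators $\Omega_{k,s}^{(m)}$ and $\sum_{i=0}^m(-1)^i\binom{m}{i}G_{k-i}L_{p+i}$ all annihilate $M$, with the appropriate index shifts in the $\fk^+$ case.

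Second, using these identities I would verify membership in $\fK(M)$ for the key difference combinations
\[
\sum_{i=0}^m(-1)^i\binom{m}{i}\,L_{k-i}\otimes L_{s+i}v,\qquad \sum_{i=0}^m(-1)^i\binom{m}{i}\,G_{k-i+\frac{1}{2}}\otimes L_{s+i}v,
\]
and the analogous variants obtained by substituting $G_{s+i+\frac{1}{2}}v$ for $L_{s+i}v$ in the second factor. Membership in $\fK(M)$ amounts to checking $\sum_i(-1)^i\binom{m}{i}(a\,x_i)v_i=0$ for every $a\in\cA$; since $\cA$ is spanned by $t^j$ and $t^j\xi$ with $t^jL_k=L_{k+j}$, $(t^j\xi)L_k=\frac{1}{2}G_{k+j+\frac{1}{2}}$, $t^jG_{k+\frac{1}{2}}=G_{k+j+\frac{1}{2}}$, $(t^j\xi)G_{k+\frac{1}{2}}=0$, each check reduces directly to Step 1 (with the $LG$ and $GG$ cases handled by supercommuting the $G$'s and reducing to $GL$- and $LL$-type sums).

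Third, I would translate these relations into a dimension bound. Decompose $\widehat{M}=\widehat{M}^{(L)}+\widehat{M}^{(G)}$ according to whether the first tensor factor is an $L$ or a $G$; since $\xi\cdot(L_k\otimes v)=\frac{1}{2}G_{k+\frac{1}{2}}\otimes v$, we have $\widehat{M}^{(G)}=\xi\cdot\widehat{M}^{(L)}$, so it suffices to bound $\widehat{M}^{(L)}_\mu$. For $\fg=\fk$, the element $t$ is invertible in $A$ and shifts weight by $1$, so all weight spaces of $\widehat{M}$ have the same dimension and it is enough to bound one. The relation in Step 2 says that, modulo $\fK(M)$, the family $\{L_{k-i}\otimes L_{s+i}v\}_i$ satisfies an $m$-th order finite-difference relation, which at fixed total weight forces the span in $i$ to be at most $m$-dimensional. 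Coupling this with the uniform bound $\dim M_\lambda\le N$, the simplicity of $M$, and the identity
\[
L_0\otimes L_j v\equiv L_j\cdot(L_0\otimes v)+j\,t^j\cdot(L_0\otimes v)\pmod{\fK(M)},
\]
one obtains a finite spanning set for $\widehat{M}^{(L)}_\mu$ whose size depends only on $m$ and $N$.

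The main obstacle is this last step: the difference relations directly constrain only elements of the form $L_k\otimes L_s v$, not $L_k\otimes w$ for arbitrary $w\in M$. Bridging this gap requires combining the $\fg$-action on $\widehat{M}$ with the simplicity of $M$ (so that every $w$ is reachable from a fixed basis of one weight space via iterated $\fg$-actions) in order to reduce arbitrary tensors $L_k\otimes w$ to finitely many generators. For the $\fk^+$ case the argument is more delicate because $t$ is not invertible in $A^+$ and one cannot invoke weight-shift symmetry, so the bound must be established on each weight space directly using the shifted relations of Lemma \ref{omega}(2).
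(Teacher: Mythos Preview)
Your Steps 1 and 2 are essentially what the paper does (indeed the paper only needs the $L\otimes L$ and $G\otimes L$ variants; the $L\otimes G$ and $G\otimes G$ variants you add are harmless but unnecessary). The divergence, and the gap, is in Step 3.

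The ``main obstacle'' you flag --- passing from $L_k\otimes L_s v$ to $L_k\otimes w$ for arbitrary $w$ --- is not an obstacle at all, and the paper resolves it in one line. Since $M$ is a weight module, $L_0$ acts on $M_\mu$ by the scalar $\mu$; hence for $\mu\neq 0$ every $u\in M_\mu$ is of the form $u=L_0v$ with $v=\mu^{-1}u$. Thus $L_n\otimes u=L_n\otimes L_0v$ is already of the required shape, and the difference relation
\[
\sum_{i=0}^m(-1)^i\binom{m}{i}L_{n-i}\otimes L_iv\in\fK(M)
\]
(with $s=0$) immediately gives $L_n\otimes u\in\sum_{i=1}^m L_{n-i}\otimes M+\fK(M)$, feeding an induction on $n$. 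The same works for $G_{n+\frac{1}{2}}\otimes u$. One obtains $\fg\otimes M=S\otimes M+\fK(M)$ with $S=\mathrm{span}\{L_k,G_{k+\frac{1}{2}}\mid 0\le k\le m\}$, giving the uniform bound $2(m+1)r$ directly, for $\fk$ and $\fk^+$ alike. (The single weight $\mu=0$, if present, contributes at most a fixed finite amount.)

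By contrast, your proposed route --- writing an arbitrary $w$ as a word in $L$'s and $G$'s applied to a fixed basis of one weight space and then unwinding via the $\cA\fg$-action on $\widehat{M}$ --- does not obviously terminate in a bound independent of the word length, and the detour through $t$-invertibility and the $\widehat{M}^{(L)}/\widehat{M}^{(G)}$ decomposition is unnecessary once you use the $u=L_0v$ trick. The missing idea is precisely that trick; with it, Step 3 collapses to a short induction and the $\fk^+$ case needs no separate treatment.
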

\begin{proof}
It is obvious if $M$ it trivial. Now suppose $M$ is nontrivial. Then $\fg M=M$ and $\Supp(M)\subseteq \lambda+\frac{1}{2}\bZ$. Suppose $\dim M_\mu\leq r, \forall \mu\in\Supp(M)$. For $\fg=\fk$ ($\fk^+$, respectively), from Lemma \ref{Omegaoper} and Lemma \ref{omega}, there exists $m\in\bN$, such that such that $\sum\limits_{i=0}^m(-1)^i\binom{m}{i}L_{j-i}L_{p+i}v=\sum\limits_{i=0}^m(-1)^i\binom{m}{i}G_{j-i+\frac{1}{2}}L_{p+i}v=0, \forall j\in\bZ (\bZ_++m, \mbox{ respectively}),p\in\bZ (\bZ_+, \mbox{ respectively}), v\in M$. Hence,
\begin{equation}\label{induction}
\sum\limits_{i=0}^m(-1)^i\binom{m}{i}L_{j-i}\otimes L_{p+i}v,\sum\limits_{i=0}^m(-1)^i\binom{m}{i}G_{j-i+\frac{1}{2}}\otimes L_{p+i}v\in \fK(M).
\end{equation}

Let $S=\mathrm{span}\{L_{k},G_{k+\frac{1}{2}}\,|\,0\leq k\leq m\}$. Then $\dim S=2(m+1)$ and $S\otimes M$ is a $\bC L_0$ submodule of $\fg\otimes M$ with \[\dim(S\otimes M)_\mu\leq 2(m+1)r, \forall \mu\in\lambda+\frac{1}{2}\bZ.\]
We will prove that $\fg\otimes M=S\otimes V+\fK(M)$, from which we know $\widehat{M}$ is cuspidal. Indeed, we will prove by induction that for all $u\in M_\mu$,
\[
L_n\otimes u, G_{n+\frac{1}{2}}\otimes u\in S\otimes M.
\]
We only prove the claim for $n>m$, the proof for $n<0$ is similar. Since $L_0$ acts on $M_\mu$ with a nonzero scalar, we can write $u=L_0v$ for some $v\in M_\mu$. Then by \eqref{induction} and induction hypothesis, we have
\begin{align*}
L_{n}\otimes L_0v&=\sum\limits_{i=0}^m(-1)^i\binom{m}{i}L_{n-i}\otimes L_iv-\sum\limits_{i=1}^m(-1)^i\binom{m}{i}L_{n-i}\otimes L_iv\in S\otimes M+\fK(M),\\
G_{n+\frac{1}{2}}\otimes L_0v&=\sum\limits_{i=0}^m(-1)^i\binom{m}{i}G_{n-i+\frac{1}{2}}\otimes L_iv-\sum\limits_{i=1}^m(-1)^i\binom{m}{i}G_{n-i+\frac{1}{2}}\otimes L_iv\\
&\in S\otimes M+\fK(M).\qedhere
\end{align*}
\end{proof}
Now we can classify all simple cuspidal $\fk$ modules and all simple cuspidal $\fk^+$ modules.
\begin{theo}\label{cuspidal}
Let $(\fg,\cA)$ be $(\fk,A)$ or $(\fk^+,A^+)$. Then up to a parity change, any nontrivial simple cuspidal $\fg$ module is isomorphic to a simple quotient of a simple cuspidal $\cA\fg$ module. More precisely, we have, up to a parity change,
\begin{enumerate}
\item any nontrivial simple cuspidal $\fk$ module is isomorphic to a simple quotient of $\Gamma(\lambda,b)$ for some $\lambda,b\in\bC$;
\item any nontrivial simple cuspidal $\fk^+$ module is isomorphic to a simple quotient of $\Gamma(\lambda,b)$, $\Gamma^+(0,b)$ or $\Gamma^-(0,b)$ for some $\lambda,b\in\bC$.
\end{enumerate}
\end{theo}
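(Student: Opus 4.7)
The plan is to reduce the classification of simple cuspidal $\fg$-modules to the classification of simple jet modules already obtained, via the $A$-cover construction. Given a nontrivial simple cuspidal $\fg$-module $M$, simplicity forces $\fg M = M$, so the $A$-cover $\widehat M = (\fg \otimes M)/\fK(M)$ is defined and comes equipped with a surjective $\fg$-module map $\pi\colon \widehat M \to M$. By Lemma \ref{covercuspidal}, $\widehat M$ is a cuspidal $\cA\fg$-module, i.e.\ an object of $\sJ(\fg,\cA)$.

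The key intermediate step is to show that $\widehat M$ has finite length as an $\cA\fg$-module. Since $M$ is cuspidal, $\Supp(\widehat M) \subseteq \lambda + \tfrac12\bZ$ for some $\lambda$, and $\dim \widehat M_\mu \le N$ for a uniform $N$. Every simple $\cA\fg$-subquotient of $\widehat M$ lies in $\sJ(\fg,\cA)$ and, by the classification theorem at the end of Section \ref{cuspidalAK} (up to a parity change), is one of $\Gamma(\mu,b)$, $\Gamma^+(0,b)$ or $\Gamma^-(0,b)$, each of which has one-dimensional weight spaces throughout its support. Choosing $\mu_0 \in \lambda + \tfrac12\bZ$ with sufficiently large positive real part guarantees that $\mu_0$ lies in the support of every candidate simple subquotient, so the number of composition factors is at most $\dim \widehat M_{\mu_0} \le N$.

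With finite length in hand, fix a composition series $0 = \widehat M_0 \subset \widehat M_1 \subset \cdots \subset \widehat M_n = \widehat M$ of $\cA\fg$-submodules and let $k$ be the smallest index with $\pi(\widehat M_k) \ne 0$. Then $\pi(\widehat M_{k-1}) = 0$, so $\pi|_{\widehat M_k}$ descends to a nonzero $\fg$-module map $\widehat M_k/\widehat M_{k-1} \to M$; its image is a nonzero $\fg$-submodule of the simple module $M$, and therefore equals $M$. This exhibits $M$ (after a possible parity change) as a simple $\fg$-module quotient of the simple $\cA\fg$-module $\widehat M_k/\widehat M_{k-1}$, which, by the jet-module classification, is one of $\Gamma(\lambda',b)$, $\Gamma^+(0,b)$ or $\Gamma^-(0,b)$, yielding the stated conclusion in each of the two cases.

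The main obstacle is the finite-length claim for $\widehat M$; however, once the classification of simple jet modules and their multiplicity-one property on their supports are invoked, the weight-space counting at a generic weight settles it. Everything else is a routine application of the $A$-cover philosophy already used for related algebras in \cite{BF,XL}.
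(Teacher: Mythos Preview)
Your argument tracks the paper's almost exactly: form the $A$-cover $\widehat{M}$, invoke Lemma~\ref{covercuspidal} to see it is cuspidal, pass to a composition series of $\cA\fg$-submodules, and take the first factor whose image under $\pi$ is nonzero. The paper simply asserts that a cuspidal $\cA\fg$-module admits a finite composition series, whereas you try to justify this step explicitly.

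Your justification works for $(\fk,A)$: every simple $A\fk$-jet module is some $\Gamma(\lambda',b')$ with support equal to the entire coset $\lambda+\tfrac12\bZ$, so any fixed weight $\mu_0$ bounds the length by $\dim\widehat{M}_{\mu_0}$. But for $(\fk^+,A^+)$ the sentence ``choosing $\mu_0$ with sufficiently large positive real part guarantees that $\mu_0$ lies in the support of every candidate simple subquotient'' is false. The module $\Gamma^-(0,b')$ has support $b'-\tfrac12\bN$, which is bounded above and therefore misses every such $\mu_0$; and for the $\Gamma^+(0,b')$ factors you would need $\mu_0$ to dominate \emph{all} occurring $b'$, which already presupposes there are only finitely many. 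A clean repair: all composition factors have support inside the single coset $\lambda+\tfrac12\bZ$, so the parameters $b'$ occurring in $\Gamma^{\pm}(0,b')$-factors differ by half-integers and are totally ordered by real part; if infinitely many $\Gamma^+$ (respectively $\Gamma^-$) factors occurred, then for any $K$ some weight would lie in the support of at least $K$ of them, contradicting the uniform bound on $\dim\widehat{M}_\mu$. With this fix your proof is complete and coincides with the paper's.
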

\begin{proof}
Let $M$ be any nontrivial simple cuspidal $\fg$ module. Then $\fg M=M$ and there is an epimorphism $\pi: \widehat{M}\to M$. From Lemma \ref{covercuspidal}, $\widehat{M}$ is cuspidal. Hence $\widehat{M}$ has a composition series of $\cA\fg$ submodules:
\[
0=\widehat{M}^{(0)}\subset\widehat{M}^{(1)}\subset\cdots\subset\widehat{M}^{(s)}=\widehat{M}
\]
with $\widehat{M}^{(i)}/\widehat{M}^{(i-1)}$ being simple $\cA\fg$ modules. Let $k$ be the minimal integer such that $\pi(\widehat{M}^{(k)})\neq0$. Then we have $\pi(\widehat{M}^{(k)})=M, \widehat{M}^{(k-1)}=0$ since $M$ is simple. So we have an $\fg$-epimorphism from the simple $\cA\fg$ module $\widehat{M}^{(k)}/\widehat{M}^{(k-1)}$ to $M$.
\end{proof}

It is straightforward to check that as a $\widehat{\fk}$ module, $\Gamma(\lambda,b)$ has a unique nontrivial sub-quotient which we denote by $\Gamma'(\lambda,b)$. More precisely, we have
\begin{prop}\label{irr}
\begin{enumerate}
\item As a $\widehat{\fk}$ module, $\Gamma(\lambda,b)$ is simple if and only if $\lambda\notin\bZ$ or $\lambda\in\bZ$ and $b\neq0,\frac{1}{2}$.
\item As $\widehat{\fk}$ modules, $\Gamma(\lambda_1,b_1)\cong\Gamma(\lambda_2,b_2)$ if and only if $\lambda_1-\lambda_2\in\bZ, b_1=b_2$ or $\lambda_1\notin\bZ,\lambda_1-\lambda_2\in\bZ, b_1=\frac{1}{2},b_2=0$.
\item $\Gamma'(0,0)\cong\Pi(\Gamma'(0,\frac{1}{2}))$, where $\Gamma'(0,0)=\Gamma(0,0)/\bC 1, \Gamma'(0,\frac{1}{2})=\mathrm{span}\{t^i, t^k\xi\,|\,i,k\in\bZ, k\neq-1\}$.
\item For any $b\in\bC$, $\Gamma(\lambda,b)(\lambda\notin\bZ),\Gamma^+(0,b)$ and $\Gamma^-(0,b)$ are simple $\fk^+$ modules.
\end{enumerate}
\end{prop}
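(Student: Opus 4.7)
The key structural fact I would exploit is that every $L_0$-weight space of $\Gamma(\lambda,b)$ is one-dimensional, with even weights lying in $\lambda+b+\bZ$ and odd weights in $\lambda+b+\tfrac12+\bZ$---two disjoint cosets of $\bZ$. Consequently any $\widehat{\fk}$-submodule is automatically the span of a subset of the monomial basis $\{t^k,\,t^k\xi:k\in\bZ\}$, and the entire proposition reduces to a combinatorial analysis of when coefficients in the formulas \eqref{actionA}--\eqref{acionG} vanish.

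For Part (1), I would first exhibit the reducible cases. Using Part (2), reduce to $\lambda=0$ whenever $\lambda\in\bZ$. At $(\lambda,b)=(0,0)$ the formulas give $L_n\cdot 1=0=G_{n+1/2}\cdot 1$, so $\bC\cdot 1$ is a proper submodule; at $(\lambda,b)=(0,1/2)$ every coefficient that would produce $t^{-1}\xi$ from another basis vector vanishes identically (the relevant factors $k+n+1$ and $2b(n+1)+k$ all become zero exactly at those terms), so $\mathrm{span}\{t^k,\,t^j\xi:j\neq -1\}$ is a proper submodule. Conversely, for $\lambda\notin\bZ$, or for $\lambda\in\bZ$ with $b\notin\{0,1/2\}$, I would take any basis vector and use $L_{-1},L_0,L_1$ together with $G_{\pm 1/2}$ to reach every other basis vector, checking that the finitely many isolated vanishings of coefficients are always bypassed by another action or by the bracket $[G_r,G_s]\sim L_{r+s}$.

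For Parts (2) and (3), the ``if'' directions are explicit constructions: the shift $t^k\mapsto t^{k+m}$, $t^k\xi\mapsto t^{k+m}\xi$ with $m=\lambda_1-\lambda_2\in\bZ$ realizes the first isomorphism in (2), while a parity-reversing ``multiplication by $\xi$'' map of the same flavour as in Part (3) realizes case (b). For Part (3) itself I would take
\[
\phi:\Gamma'(0,0)\to\Pi(\Gamma'(0,1/2)),\qquad \phi(t^k)=k\,t^{k-1}\xi\ (k\neq 0),\quad \phi(t^k\xi)=t^k,
\]
and verify the $L_n$- and $G_{n+1/2}$-intertwining directly from the action formulas (the scalar matching reduces to identities like $k(n+k)=k\cdot((k-1)+(n+1))$). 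The ``only if'' direction of (2) reads the coset $\lambda+b+\bZ$ off the $L_0$-support and recovers $b$ from the $L_n$-action coefficients on a fixed weight vector; allowing parity-reversing isomorphisms (which swap $b$ with $b+\tfrac12$) is precisely what produces case (b).

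For Part (4), I would rerun the Part (1) analysis restricted to $\fk^+=\fk_{\ge -1}$. When $\lambda\notin\bZ$, $L_{-1}$ acts on $t^k$ by the nonzero scalar $\lambda+k$, so lowering in degree is always possible; combined with $G_{-1/2}$ to switch between even and odd parts, any basis vector generates the full module. For $\Gamma^\pm(0,b)$ one first checks that $A^+$ is $\fk^+$-stable in $\Gamma(0,b)$, then repeats the generation argument inside $A^+$ and inside the quotient, paying special attention to the boundary vectors $1\in \Gamma^+(0,b)$ and $t^{-1},\,t^{-1}\xi\in \Gamma^-(0,b)$ (where $L_{-1}$ produces zero). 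The main obstacle throughout, and the real content of Part (1), is the bookkeeping for nongeneric parameters: many coefficients vanish at specific $(\lambda,b)$ (for instance at $b=1$ or $b=-\tfrac12$), and one must verify in each such case that the vanishing is only local and is compensated by another nonzero action, so that no spurious submodule appears.
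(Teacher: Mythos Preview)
The paper offers no proof of this proposition beyond the remark that it is ``straightforward to check'', so there is nothing to compare against at the level of argument; your direct weight-by-weight analysis using the one-dimensionality of the $L_0$-eigenspaces is exactly what the authors have in mind, and your treatment of Parts~(1) and~(3) is correct (your map $\phi$ in (3) does intertwine, as one verifies from the action formulas).

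Two issues, however, need flagging. In Part~(2)(b) your instinct that the map must be parity-reversing is correct, but then what you produce is $\Gamma(\lambda_1,\tfrac12)\cong\Pi(\Gamma(\lambda_2,0))$, not a genuine super-isomorphism: the even weight support of $\Gamma(\lambda_1,\tfrac12)$ is $\lambda_1+\tfrac12+\bZ$ while that of $\Gamma(\lambda_2,0)$ is $\lambda_2+\bZ$, and these are disjoint when $\lambda_1-\lambda_2\in\bZ$. So either the statement is meant up to parity change, or it should carry a $\Pi$; in any case your ``only if'' argument (recovering $b$ from the coefficients) needs to explain why $b$ and $b+\tfrac12$ are identified only in the parity-reversing sense.

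More seriously, your assertion in Part~(4) that every isolated coefficient vanishing is ``compensated by another nonzero action'' is false at $b=0$ for $\Gamma^+(0,b)$ and at $b=\tfrac12$ for $\Gamma^-(0,b)$. In $\Gamma^+(0,0)$ one has $L_n\cdot 1=b(n{+}1)t^n=0$ and $G_{n+1/2}\cdot 1=-2b(n{+}1)t^n\xi=0$ for every $n\ge -1$, so $\bC\cdot 1$ is a genuine proper $\fk^+$-submodule and $\Gamma^+(0,0)$ is not simple. Dually, in $\Gamma^-(0,\tfrac12)$ the span of all basis vectors except $t^{-1}\xi$ is a proper $\fk^+$-submodule, by the same vanishing pattern you already identified in Part~(1). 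Hence Part~(4) as written is not literally correct at these two parameter values, and your generation argument cannot close there; what is true is that each $\Gamma^\pm(0,b)$ has a unique nontrivial simple $\fk^+$-subquotient, and this is what should feed into the final classification theorem.
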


Combining Lemma \ref{appN=1}, Theorem \ref{cuspidal} and Proposition \ref{irr}, we can get the following result, which is a super version of the classical result for the Virasoro algebra and its subalgebra $W^+$ (cf. \cite{Ma}). The result of $\widehat{\fk}$ was also given in \cite{S2} by much complicated calculations.

\begin{theo}
\begin{enumerate}
\item Any simple $\widehat{\fk}$ module with finite dimensional weight spaces is a highest weight module, lowest weight module, or is isomorphic to  $\Gamma'(\lambda,b), \Pi(\Gamma'(\lambda,b))$ for some $\lambda,b\in\bC$ (which is called a module of the intermediate series).
\item Up to a parity change, any simple $\widehat{\fk}^+$ module with finite dimensional weight spaces is  isomorphic to $\Gamma(\lambda,b)$, $\Gamma^+(0,b)$, or $\Gamma^-(0,b)$ for some $\lambda\in\bC\setminus\bZ, b\in\bC$.
\end{enumerate}
\end{theo}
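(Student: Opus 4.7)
The plan is to assemble the result directly from the three preceding ingredients (Lemma \ref{appN=1}, Theorem \ref{cuspidal}, and Proposition \ref{irr}), with essentially no new computation required. Let $M$ be a simple weight $\widehat{\fk}$ module with finite dimensional weight spaces. I would dichotomize on whether $M$ is cuspidal. If $M$ is not cuspidal, then Lemma \ref{appN=1}(1) immediately places $M$ as a highest weight or lowest weight module, which covers the first alternative in (1). If instead $M$ is cuspidal, I would invoke the remark at the beginning of Section \ref{main} that $C$ acts trivially on any simple cuspidal $\widehat{\fk}$ module (a standard consequence of the Virasoro representation theory applied to the even subalgebra), so $M$ descends to a simple cuspidal $\fk$ module.

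For this cuspidal part, Theorem \ref{cuspidal}(1) gives that, up to a parity change, $M$ is isomorphic to a simple $\fk$-subquotient of some $\Gamma(\lambda,b)$. Proposition \ref{irr}(1) then tells me exactly when $\Gamma(\lambda,b)$ is itself simple, and parts (2)-(3) identify the only possible proper simple subquotient as $\Gamma'(\lambda,b)$ (together with its parity-shift $\Pi(\Gamma'(\lambda,b))$, arising from the isomorphism $\Gamma'(0,0)\cong\Pi(\Gamma'(0,\tfrac12))$). Collecting these possibilities yields the list in statement (1).

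For statement (2), the argument is even shorter. Lemma \ref{appN=1}(2) guarantees that every simple weight $\fk^+$ module with finite dimensional weight spaces is already cuspidal, so no highest or lowest weight alternative needs to appear. Theorem \ref{cuspidal}(2) then says that, up to a parity change, such a module is a simple quotient of $\Gamma(\lambda,b)$, $\Gamma^+(0,b)$, or $\Gamma^-(0,b)$. Finally Proposition \ref{irr}(4) asserts that each of these three $\fk^+$ modules is already simple (with the understanding $\lambda\notin\bZ$ in the $\Gamma(\lambda,b)$ case), so ``simple quotient'' can be replaced by the module itself, giving the stated list.

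I do not anticipate any genuine obstacle: the hard work was done in proving Lemma \ref{appN=1} (non-cuspidal implies highest/lowest weight; the analogous statement for $\fk^+$ using the $G_{-1/2}$-duality argument), in constructing the $A$-cover in Lemma \ref{covercuspidal} and Theorem \ref{cuspidal}, and in the case analysis of Proposition \ref{irr}. The only minor bookkeeping is to track the parity shift $\Pi(\cdot)$ consistently and to note that the triviality of $C$ on cuspidal modules allows us to pass freely between $\widehat{\fk}$ and $\fk$ in part (1); both are routine.
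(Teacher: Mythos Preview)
Your proposal is correct and matches the paper's own approach exactly: the paper offers no proof beyond the sentence ``Combining Lemma \ref{appN=1}, Theorem \ref{cuspidal} and Proposition \ref{irr}, we can get the following result,'' and you have simply spelled out how those three ingredients fit together. The only trivial omission is that Theorem \ref{cuspidal} applies to \emph{nontrivial} cuspidal modules, but the one-dimensional trivial module is already a highest (and lowest) weight module, so it falls under the first alternative in (1).
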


\noindent{\bf Acknowledgement:} Y. Cai is partially supported by NSF of China (Grant 11801390) and High-level Innovation and Entrepreneurship Talents Introduction Program of Jiangsu Province of China.  R. L\"{u} is partially supported by NSF of China (Grant 11471233, 11771122, 11971440).

\section*{References}

\

Y. Cai: Department of Mathematics, Soochow University, Suzhou 215006, P. R. China. Email: yatsai@mail.ustc.edu.cn

R. L\"{u}: Department of Mathematics, Soochow University, Suzhou 215006, P. R. China. Email: rlu@suda.edu.cn.


\end{document}